\documentclass[12pt]{article}

\usepackage[T1]{fontenc}
\usepackage[active]{srcltx}
\usepackage{lmodern}
\usepackage{amsmath,amstext,amssymb,graphicx,graphics,color}
\usepackage{theorem}
\usepackage{cite}               
\usepackage{hyperref}           
\usepackage{dsfont}             

\addtolength{\hoffset}{-.6cm}    
\addtolength{\textwidth}{1.2cm}  
\addtolength{\voffset}{-.5cm}    
\addtolength{\textheight}{1.5cm} 

\theorembodyfont{\normalfont\slshape} 
\newtheorem{definition}{Definition}
\newtheorem{proposition}{Proposition}[section]
\newtheorem{theorem}[proposition]{Theorem}

\newtheorem{lemma}[proposition]{Lemma}
\theoremstyle{break} 

\newenvironment{proof}%
{{\par\noindent \bf Proof. \nobreak}}%
{\nobreak \removelastskip \nobreak \hfill $\Box$ \medbreak}
\newenvironment{proof2}%
{{\par\noindent \bf Proof \nobreak}}%
{\nobreak \removelastskip \nobreak \hfill $\Box$ \medbreak}
{{\par\noindent \bf Proof lemma. \nobreak}}%
{\nobreak \removelastskip \nobreak \bf End proof lemma. \medbreak}

\usepackage{fancyhdr}
\pagestyle{fancy}

\fancyhf{} \fancyhead[LE,RO]{\bfseries\thepage} \fancyhead[LO]{\bfseries\rightmark}
\fancyhead[RE]{\bfseries\leftmark}

\addtolength{\headheight}{0.5pt} \fancypagestyle{plain}{
  \fancyhead{}
  
}



\def\paragraph#1{{\bf #1\ }}

\newcommand{\expo}{\mathrm{e}}

\newcommand{\ds}{\displaystyle}

\makeatletter
\def\iddots{\mathinner{\mkern1mu\raise\p@
    \vbox{\kern7\p@\hbox{.}}\mkern2mu
    \raise4\p@\hbox{.}\mkern2mu\raise7\p@\hbox{.}\mkern1mu}}
\makeatother

\title{A macroscopic model for a system of swarming agents using curvature control}
\author{P. Degond\footnotemark[1] \footnotemark[2] \and S. Motsch\footnotemark[3]}
\date{}

\begin{document}

\renewcommand{\thefootnote}{\fnsymbol{footnote}}

\footnotetext[1]{Universit\'{e} de Toulouse, UPS, INSA, UT1, UTM, Institut de
  Math\'{e}matiques de Toulouse, F-31062 Toulouse, France,
  \url{pierre.degond@math.univ-toulouse.fr}} \footnotetext[2]{CNRS, Institut de
  Math\'{e}matiques de Toulouse UMR 5219, F-31062 Toulouse, France}
\footnotetext[3]{Center of Scientific Computation and Mathematical Modeling (CSCAMM),
  University of Maryland, College Park, MD 20742, USA, \url{smotsch@cscamm.umd.edu}}

\renewcommand{\thefootnote}{\arabic{footnote}}

\maketitle

\vspace{0.5 cm}
\begin{abstract}
  In this paper, we study the macroscopic limit of a new model of collective
  displacement. The model, called PTWA, is a combination of the Vicsek alignment
  model\cite{vicsek_novel_1995} and the Persistent Turning Walker (PTW) model of motion by
  curvature control \cite{gautrais_analyzing_2009,degond_large_2008}. The PTW model was
  designed to fit measured trajectories of individual fish
  \cite{gautrais_analyzing_2009}. The PTWA model (Persistent Turning Walker with
  Alignment) describes the displacements of agents which modify their curvature in order
  to align with their neighbors. The derivation of its macroscopic limit uses the
  non-classical notion of generalized collisional invariant introduced in
  \cite{degond_continuum_2008}. The macroscopic limit of the PTWA model involves two
  physical quantities, the density and the mean velocity of individuals. It is a system of
  hyperbolic type but is non-conservative due to a geometric constraint on the
  velocity. This system has the same form as the macroscopic limit of the Vicsek model
  \cite{degond_continuum_2008} (the 'Vicsek hydrodynamics') but for the expression of the
  model coefficients. The numerical computations show that the numerical values of the
  coefficients are very close.  The 'Vicsek Hydrodynamic model' appears in this way as a
  more generic macroscopic model of swarming behavior as originally anticipated.
\end{abstract}

\medskip
\noindent {\bf Key words: } Individual based model, Fish behavior, Persistent Turning
Walker model, Vicsek model, Orientation interaction, Asymptotic analysis, Hydrodynamic
limit, Collision invariants.

\medskip
\noindent
{\bf AMS Subject classification: } 35Q80, 35L60, 82C22, 82C31, 82C70, 82C80, 92D50.
\vskip 0.4cm


\medskip
\noindent {\bf Acknowledgements: } This work has been supported by the Marie Curie Actions
of the European Commission in the frame of the DEASE project (MEST-CT-2005-021122) and by
the french 'Agence Nationale pour la Recherche (ANR)' in the frame of the contracts
'Panurge' (ANR-07-BLAN-0208-03) and 'Pedigree' (ANR-08-SYSC-015-01). The work of S. Motsch
is partially supported by NSF grants DMS07-07949, DMS10-08397 and FRG07-57227.

\vskip 0.4cm

\section{Introduction}
\label{sec:intro}
\setcounter{equation}{0}

Modeling swarming behavior has attracted a lot of attention in the recent years. To model
a flock of birds \cite{ballerini_interaction_2008}, a school of fish
\cite{couzin_collective_2002, hemelrijk_self-organized_2008, viscido_factors_2007,
  parrish_self-organized_2002} or the displacement of ants \cite{jeanson_modulation_2004,
  couzin_self-organized_2003, theraulaz_spatial_2002}, a key question is to understand how
to relate the collective behavior of large groups of agents to simple individual
mechanisms \cite{camazine_self-organization_2001, couzin_self-organization_2003}. From a
mathematical point of view, this question takes the form of the derivation of macroscopic
equations from individual based models\cite{degond_continuum_2008,
  bertin_boltzmann_2006,bellomo_modeling_2008,chuang_state_2007,filbet_derivation_2005}. This
paper is devoted to the derivation of a macroscopic model for a new type of model of
collective behavior where agents control their motion by changing the curvature of their
trajectory. This model has been shown to provide the best fit of fish trajectories
\cite{gautrais_analyzing_2009}.

Among models of collective displacements, the so-called Vicsek model has received a
particular attention \cite{vicsek_novel_1995,czirok_collective_2000}. This model describes
the tendency of individuals to align with their congeners. Many features of this model
have been studied such as the existence of a critical point
\cite{vicsek_novel_1995,chate_modeling_2008}, the long time
behavior\cite{nagy_new_2007,chate_modeling_2008} or the derivation of a continuum
model\cite{degond_continuum_2008,bertin_boltzmann_2006}. Due to its simplicity, several
extensions or modifications of this model have been proposed, such as the Cucker-Smale
model
\cite{cucker_emergent_2007,ha_simple_2009,ha_particle_2008,carrillo_asymptotic_2010,canizo_well-posedness_2009}. There
is also a variety of models which add an attraction and a repulsion rule to the Vicsek
model \cite{chate_modeling_2008,degond_congestion_2009}. However, the Vicsek model has
been proposed on phenomenological bases. By contrast, the experiments of
\cite{gautrais_analyzing_2009} have shown that the Persistent Turning Walker (PTW) model
provides the best fit to individual fish trajectories. In the PTW model, the individual
controls its motion by acting on the curvature of its trajectory instead of acting on its
velocity. However, in its version of \cite{gautrais_analyzing_2009,degond_large_2008}, the
PTW model only describes the evolution of a single individual. The model does not take
into account the interactions between congeners.

In the present work, interactions between individuals are introduced in the PTW model by
means of an alignment rule, like in the Vicsek model. The resulting model, called PTWA
(Persistent Turning Walker with Alignment) describe how each individual is influenced by
the average velocity of its surrounding neighbors. In the framework of the PTW model where
individuals control their motion by acting of the curvature of their trajectory, this
influence must lead to a modification of this curvature. This contrasts with the Vicsek
model, where particles are directly modifying their velocity as a result of the
interaction.

The PTWA model is based on the assumption that the subjects use the time derivative of
their trajectory curvature (or of their acceleration) as a control variable for planning
their movement. Such models are not commonplace in the literature. Their first occurrence
is, to the best or our knowledge, in \cite{gautrais_analyzing_2009,degond_large_2008}. The
present work is the first one in which interaction among the agents is taken into account
within this kind of models (see also \cite{gautrais__????}). We note that
\cite{szabo_turning_2008} introduces the acceleration of neighbors in the rule updating the
subjects' velocities in a variant of the Vicsek Individual-Based model
\cite{vicsek_novel_1995} but motion planning is eventually made by updating the velocity
and not the acceleration.

Once the PTWA model is set up, the main task of the present paper is to derive the
macroscopic limit of this new model. This macroscopic limit is intended to provide a
simplified description of the system at large scales. The major problem for this
derivation is that there is nothing like momentum or energy conservation in the PTWA
model. Such conservation laws are the corner stone of the classical theory of macroscopic
limits in kinetic theory \cite{degond_macroscopic_2004,cercignani_boltzmann_1988}. Indeed,
as a consequence of this absence of conservation, the dimension of the manifold of local
equilibria in the PTWA model is larger than the dimension of the space of collisional
invariants. Conservation laws are therefore missing for providing a closed set of
equations for the macroscopic evolution of the parameters of the local equilibria.  To
overcome this problem, we use the notion of generalized collisional invariant introduced
in \cite{degond_continuum_2008}. Thanks to this new notion, a closed set of macroscopic
equation for the PTWA model can be derived.

The macroscopic model consists of a conservation equation for the local particle density
and an evolution equation for the average velocity. The latter is constrained to be of
unit norm. The resulting system is a non-conservative hyperbolic which shows similarities
but also striking differences to the Euler system of gas dynamics. It has also the same
form as the previously derived macroscopic limit of the Vicsek model (also referred to as
the 'Vicsek Hydrodynamic model') in \cite{degond_continuum_2008}, but for the expression
of the model coefficients. At the end of the paper, we propose a numerical method to
compute the generalized collisional invariant out of which the coefficients of the
macroscopic model are derived. The similarity between the 'Vicsek hydrodynamics' and the
'PTWA hydrodynamics' can be better understood by considering the relations between the
microscopic models. Indeed, the Vicsek model can be seen as a special limit of the PTWA
model in a well-suited asymptotic limit. Work is in progress to establish this connexion
firmly.

The inclusion of the alignment rule in the PTW model changes drastically the large scale
dynamics of the system. Without this alignment rule, the PTW model exhibits a diffusive
behavior at large scales \cite{cattiaux_asymptotic_2010,degond_large_2008}. By contrast,
when the alignment rule is included, the model becomes of hyperbolic type. Therefore, the
local alignment rule added to the PTW model generates convection at the macroscopic scale.

Since the addition of the alignment rule modifies drastically the dynamics of the PTW
model, it is also interesting to study the large scale effects of other types of local
rules such as attraction-repulsion. The goal is to find a common framework for the large
scale dynamics of a large class of swarming models.  Currently, there exist a profusion of
individual based models, especially for fish behavior (see
\cite{parrish_self-organized_2002} for a short review). In a macroscopic model, only the
gross features of the microscopic model remain. Therefore, the derivation of macroscopic
models may be a tool to better capture the common features and differences between these
different types of swarming models.

The outline of the paper is as follows: in section 2, we introduce the PTWA model and the
main result is stated. Section 3 is devoted to the proof of the derivation of the
macroscopic limit of the PTWA model. In section 4, we study some properties of the
so-obtained macroscopic model and we numerically estimate the involved
coefficients. Finally, in section 5, we draw a conclusion of this work.

\section{Presentation of the model and main result}
\label{sec:presentation}
\setcounter{equation}{0}

\subsection{The individual based model}
\label{subseq:IBM}

The starting point is a model in which alignment interaction between agents is introduced
inside the Persistent Turning Walker model (PTW)
\cite{gautrais_analyzing_2009,degond_large_2008}. The PTW model is a model for individual
displacements which has been derived to fit experimentally observed trajectories of
fish. It supposes that individuals control their motion by acting of the curvature of
their trajectory. To make it a realistic model for collective displacements, the PTW model
must be enriched by introducing inter-individual interactions. Indeed, one of the main
features of collective motion such as those observed in animal populations (fish schools,
mammalian herds, etc.) is the ability of individuals to coordinate with each
other. Observations suggest that trend to alignment is an important component of this
interaction and leads to a powerful coordination-building mechanism by synchronizing the
agent's velocities one to each other. One of the simplest models of alignment interaction
is the Vicsek model \cite{vicsek_novel_1995}. This time-discrete model supposes that
individuals move at constant speed and align to the average velocity of their neighbors
(up to a certain stochastic uncertainty) at each time step. A time-continuous version of
this dynamics has been derived in \cite{degond_continuum_2008}.

In order to combine the PTW displacement model and the Vicsek alignment interaction model
(in the time-continuous framework of \cite{degond_continuum_2008}), we propose the
following model further referred to ad the PTWA model (PTW model with alignment): among a
population of $N$ agents, the motion of the $\text{i}^{\text{th}}$ individual is given by
\begin{eqnarray}
  \label{eq:x}
  \frac{d{\bf x}_i}{dt} &=& c \vec{\tau}(\theta_i), \\
  \label{eq:theta}
  \frac{d\theta_i}{dt} &=& c \kappa_i , \\
  \label{eq:kappa}
  d \kappa_i &=& a(\upsilon\overline{\kappa}_i-\kappa_i)\,dt + b\, dB_t^i,
\end{eqnarray}
with
\begin{equation} 
  \label{eq:kappa_bar}
  \overline{\kappa}_i = \vec{\tau}(\theta_i) \times \overline{\Omega}_i
\end{equation}
and
\begin{equation} 
  \label{eq:Omega_bar}
  \overline{\Omega}_i =  \frac{{\bf J}_i}{|{\bf J}_i|}  \quad , \quad \ds
  {\bf J}_i = c\!\!\!\!\!\!\sum_{|{\bf x}_i-{\bf x}_j|<R}\!\!\! \vec{\tau}(\theta_j),
\end{equation}
where ${\bf x}=(x_1,x_2) \in \mathbb{R}^2$ is the position of the individual,
$\vec{\tau}(\theta_i)=(\cos \theta_i\, , \, \sin \theta_i)$ is the direction of its
velocity vector, with the angle $\theta_i \in (-\pi,\pi]$ measured from the $x_1$
direction, $\kappa_i \in \mathbb{R}$ is the curvature of its trajectory and $B_t^i$ is a
standard Brownian motion (with $B_t^i$ independent of $B_t^j$ for $i\neq j$). The
magnitude of the velocity is constant and denoted by $c>0$. The constant $a$ is a
relaxation frequency and $b$ quantifies the intensity of the random perturbation of the
curvature. The vector $\overline{\Omega}_i$ is the mean direction of the neighbors of the
$\text{i}^{\text{th}}$ individual (defined as the individuals $j$ which are at a distance
less than $R$ from ${\bf x}_i$, $R>0$ being the perception distance of the individuals,
supposed given).

The trend to alignment is modeled by the relaxation term of (\ref{eq:kappa}) (in factor of
$a$). It describes the relaxation of the trajectory curvature to the target curvature
$\overline{\kappa}_i$. $\overline{\kappa}_i$ is computed by taking the cross product\footnote{For
  two-dimensional vectors $\vec a = (a_1,a_2)$, $\vec b = (b_1,b_2)$, the cross product
  $\vec a \times \vec b$ is the scalar $a_1 b_2 - a_2 b_1$.} of the direction of the individual
$\vec{\tau}(\theta_i)$ and the mean direction of its neighbors $\overline{\Omega}_i$. $\upsilon \overline{\kappa}_i$ is
the trajectory curvature the individual must achieve in order to align to its
neighbors. It increases with increasing difference between the individual's velocity and
the target velocity. $\upsilon$ is the typical value of the individuals' trajectory curvature and
can be seen as the 'comfort' curvature. The larger $\upsilon$ is, the faster alignment
occurs. The second term of (\ref{eq:kappa}) (in factor of $b$) is a random term which
describes the tendency of individuals to desynchronize to their neighbors in order for
instance, to explore their environment. At equilibrium, these two antagonist effects lead
to a stationary distribution of curvatures which is the building block of the construction
of the macroscopic model.


We illustrate this model in figure \ref{fig:fish_double}. In the left figure, a fish is
represented turning to the left. However, its neighbors are moving towards the other
direction ($\overline{\Omega}$ is pointing to the right). Then the fish is going to adjust
its curvature in order to move towards the same direction as $\overline{\Omega}$ (right
figure). The adjustment of its curvature requires a certain time of order $1/a$, after
which the curvature $\kappa$ is close to $\upsilon \overline{\kappa}$. Therefore, in this
model, there is a time delay between the current acceleration of the fish ($\kappa$) and
its desired acceleration ($\upsilon \overline{\kappa}$). In most models describing animal
behavior, the dynamics is inspired by Newton's second law: the acceleration of an
individual is equal to a force term which incorporates all information about the
environment. In the present model, individuals need a certain time to adjust their
acceleration. This rule can be seen as a modification of Newton's second law saying that
the force is proportional to the time derivative of the acceleration rather than to the
acceleration itself.

\begin{figure}[ht]
  \centering
  \includegraphics[scale=.7]{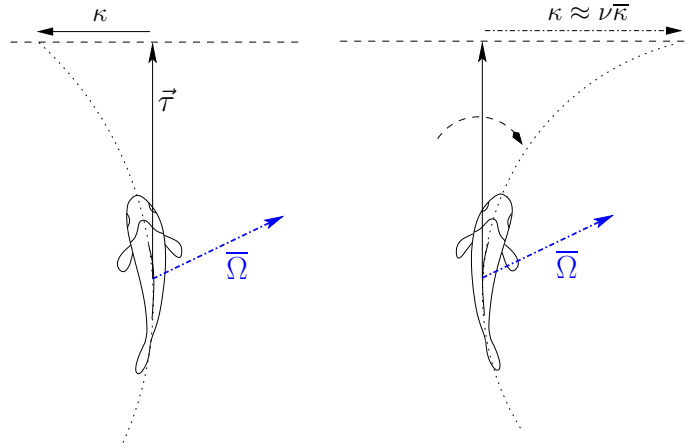}
  \caption{Illustration of the model (\ref{eq:x})-(\ref{eq:kappa}). On the left figure, a
    fish is turning to the left, while its neighbors are moving to the right
    ($\overline{\Omega}$). After a certain time of order $1/a$, the fish adjusts its
    curvature in order to align its velocity with $\overline{\Omega}$ (right figure).}
  \label{fig:fish_double}
\end{figure}


Our goal is the study of model (\ref{eq:x})-(\ref{eq:kappa}) at large time and space
scales. For this purpose, it is convenient to introduce scaled variables. We use $x_0 =
\upsilon^{-1}$ as space unit, $t_0 = (c\upsilon)^{-1}$ as time unit, $\kappa_0 = x_0^{-1}
= \upsilon$ as curvature unit. We introduce the dimensionless time, space and curvature as
$t'= t/t_0$, ${\bf x}' = {\bf x}/x_0$ and $\kappa' = \kappa/\kappa_0$ and for simplicity
we omit the primes in the discussion below. In scaled variables, the PTWA model is given
by (for the $\text{i}^{\text{th}}$ individual) :
\begin{eqnarray}
  \label{eq:position}
  \frac{d{\bf x}_i}{dt} &=& \vec{\tau}(\theta_i), \\
  \label{eq:angle}
  \frac{d\theta_i}{dt} &=& \kappa_i , \\
  \label{eq:curvature}
  d \kappa_i &=& \lambda(\overline{\kappa}_i-\kappa_i)\,dt + \sqrt{2} \alpha\, dB_t^i, 
\end{eqnarray}
with $\overline{\kappa}_i$ defined by equation (\ref{eq:kappa_bar}),(\ref{eq:Omega_bar})
($c$ being replaced by $1$) and $\lambda$, $\alpha$ given by:
\begin{displaymath}
  \lambda = \frac{a}{c\upsilon} \qquad, \qquad \alpha^2 = \frac{b^2}{2c\upsilon^3}.
\end{displaymath}

\subsection{Main result}
\label{subseq:Main}

A first step consists in providing a mean-field description of the PTWA
dynamics. Introducing the probability density function of fish $f(t,{\bf
  x},\theta,\kappa)$, we will prove formally that the PTWA model
(\ref{eq:position})-(\ref{eq:curvature}) leads to the following equation for $f$:
\begin{equation}
  \label{eq:f}
  \partial_tf + \vec{\tau}(\theta)\cdot\nabla_{{\bf x}}f + \kappa\partial_\theta f +
  \lambda\partial_\kappa\big[(\overline{\kappa}-\kappa)f\big] =
  \alpha^2 \partial_{\kappa}^2f,
\end{equation}
with
\begin{equation}
  \label{eq:kappa_bar_kinetic}
  \overline{\kappa} = \vec{\tau}(\theta)\times\overline{\Omega}({\bf x})
\end{equation}
and
\begin{equation}
  \label{eq:theta_bar_kinetic}
  \overline{\Omega}({\bf x}) =  \frac{{\bf J}({\bf x})}{|{\bf J}({\bf x})|} \quad, \quad \ds
  {\bf J}({\bf x}) = \int_{|{\bf x}-{\bf y}|<R,\,\theta,\kappa} \vec{\tau}(\theta)f({\bf
    y},\theta,\kappa)\,d{\bf y} d\theta d\kappa.
\end{equation}


\noindent The main concern of this paper is the study of the so-called hydrodynamic limit
of the mean-field model (\ref{eq:f}). With this aim, we perform a new rescaling and
introduce the macroscopic variables $\tilde t$ and $\tilde {\bf x}$:
\begin{equation}
  \label{eq:macro_variables}
  \tilde t = \varepsilon t \qquad , \qquad \tilde {\bf x} = \varepsilon {\bf x},
\end{equation}
with $\varepsilon>0$ a small number representing the ratio between the microscopic and the
macroscopic time and space scales. In this paper, we give a formal proof that the density
distribution of individuals in these new variables $f^\varepsilon(\tilde t, \tilde {\bf
  x},\theta,\kappa)$ converges in the limit $\varepsilon\rightarrow0$ to the solutions of
a hydrodynamic like model. More precisely, the theorem reads (dropping the tildes for
simplicity):

\begin{theorem}
  \label{thm:formal_cvgnce}
  In the limit $\varepsilon\rightarrow0$, the distribution $f^\varepsilon$ converges to an equilibrium:
  \begin{displaymath}
    f^\varepsilon \stackrel{\varepsilon \rightarrow 0}{\rightharpoonup} \rho \mathcal{M}_{\Omega}(\theta) \mathcal{N}(\kappa)
  \end{displaymath}
  with $\mathcal{M}_{\Omega}$ and $\mathcal{N}$ (resp.) a Von Mises distribution and a
  Gaussian distribution defined at (\ref{eq:Von}) and (\ref{eq:Gauss}). The density
  $\rho=\rho({\bf x},t)$ and the direction of the flux $\Omega=\Omega({\bf x},t)$ satisfy
  the following system:
  \begin{equation}
    \label{eq:final_system}
    \begin{array}{l}
      \partial_t \rho + c_1 \nabla_{{\bf x}} \cdot (\rho\Omega) = 0,\\
      \rho \big(\partial_t \Omega + c_2 (\Omega\cdot\nabla_{{\bf x}}) \Omega\big) +
      \frac{\alpha^2}{\lambda^2}(\mbox{Id}-\Omega\otimes\Omega) \nabla_{\!{\bf x}}\rho =
      0,
    \end{array}
  \end{equation}
  where $c_1$ and $c_2$ are two positive constants defined later on at
  (\ref{eq:c_1})~(\ref{eq:c_2}).
\end{theorem}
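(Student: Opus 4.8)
The plan is to read (\ref{eq:f}) in the macroscopic variables (\ref{eq:macro_variables}) as a singular perturbation problem. Dropping tildes, the rescaled equation takes the form
\[
  \varepsilon\big(\partial_t f^\varepsilon + \vec{\tau}(\theta)\cdot\nabla_{\bf x} f^\varepsilon\big) = Q(f^\varepsilon), \qquad
  Q(f) := -\kappa\,\partial_\theta f - \lambda\,\partial_\kappa\big[(\overline{\kappa}-\kappa)f\big] + \alpha^2\,\partial_\kappa^2 f ,
\]
where $Q$ depends on $f$ only through the flux direction $\overline{\Omega}[f]$ entering $\overline{\kappa}=\vec{\tau}(\theta)\times\overline{\Omega}$. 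First I would note that in the scaling (\ref{eq:macro_variables}) the interaction ball $|{\bf x}-{\bf y}|<R$ shrinks to a point, so $\overline{\Omega}$ in (\ref{eq:theta_bar_kinetic}) reduces to the direction of the \emph{local} flux $\int\vec{\tau}(\theta)f\,d\theta\,d\kappa$. For a frozen direction $\Omega$ the operator $Q_\Omega$ obtained by replacing $\overline{\Omega}$ with $\Omega$ is linear, and I would determine its kernel: recognizing the $(\theta,\kappa)$ dynamics as an underdamped Langevin system with conservative forcing $\lambda\,\vec{\tau}(\theta)\times\Omega$, the equilibria are products $\rho\,\mathcal{M}_\Omega(\theta)\,\mathcal{N}(\kappa)$, which I would verify by direct substitution, fixing $\mathcal{N}$ Gaussian with variance $\alpha^2/\lambda$ and $\mathcal{M}_\Omega$ Von Mises with concentration $\lambda^2/\alpha^2$ centred on $\Omega$. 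This gives the leading-order form $f^\varepsilon\rightharpoonup\rho\,\mathcal{M}_\Omega\,\mathcal{N}$ asserted in the theorem.

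The manifold of equilibria is parametrized by the scalar $\rho$ and the direction $\Omega$, i.e. two free parameters per point, whereas $Q$ admits only the constant as a classical collisional invariant (mass being the sole conserved quantity). Integrating the rescaled equation in $(\theta,\kappa)$ and letting $\varepsilon\to0$ therefore yields only the continuity equation $\partial_t\rho + c_1\nabla_{\bf x}\cdot(\rho\Omega)=0$, with $c_1=\langle\cos(\theta-\bar\theta)\rangle_{\mathcal{M}_\Omega}\in(0,1)$ the order parameter of the Von Mises law; one equation is still missing for $\Omega$. To supply it I would use the generalized collisional invariants of \cite{degond_continuum_2008}: for fixed $\Omega$, a GCI is a function $\psi_\Omega$ with $\int Q_\Omega(f)\,\psi_\Omega\,d\theta\,d\kappa=0$ for every $f$ whose flux is parallel to $\Omega$. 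By duality this amounts to solving the adjoint problem
\[
  Q_\Omega^*\psi_\Omega := \kappa\,\partial_\theta\psi_\Omega + \lambda(\overline{\kappa}-\kappa)\,\partial_\kappa\psi_\Omega + \alpha^2\,\partial_\kappa^2\psi_\Omega = \Omega^\perp\!\cdot\vec{\tau}(\theta),
\]
the right-hand side being the single constraint function (up to scalar). Proving existence and uniqueness (modulo additive constants) of $\psi_\Omega$ is the step I expect to be the main obstacle, since $Q_\Omega^*$ is not elliptic but only hypoelliptic of Kolmogorov type; I would set it in a Gaussian-weighted $L^2$ space, check the solvability condition $\int(\Omega^\perp\!\cdot\vec{\tau})\,\mathcal{M}_\Omega\mathcal{N}\,d\theta\,d\kappa=0$ (which holds by the odd symmetry of the source about $\bar\theta$), and obtain $\psi_\Omega$ by a Lax--Milgram or hypocoercivity argument.

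Finally I would derive the equation for $\Omega$. The crucial observation is that $f^\varepsilon$ has flux exactly parallel to $\overline{\Omega}[f^\varepsilon]$ by definition of (\ref{eq:theta_bar_kinetic}); multiplying the rescaled equation by $\psi_{\overline{\Omega}[f^\varepsilon]}$ and integrating makes the right-hand side vanish \emph{identically}, giving
\[
  \int\big(\partial_t f^\varepsilon + \vec{\tau}(\theta)\cdot\nabla_{\bf x}f^\varepsilon\big)\,\psi_{\overline{\Omega}[f^\varepsilon]}\,d\theta\,d\kappa = 0 .
\]
Passing to the limit with $f^\varepsilon\to\rho\,\mathcal{M}_\Omega\mathcal{N}$ and $\overline{\Omega}[f^\varepsilon]\to\Omega$, and inserting the equilibrium, I would compute the resulting moments. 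Because $\psi_\Omega$ is odd and $\mathcal{M}_\Omega\mathcal{N}$ even under the reflection $\theta\mapsto2\bar\theta-\theta,\ \kappa\mapsto-\kappa$, only the $\Omega^\perp$-component of $\vec{\tau}$ survives, which produces the projection $(\mathrm{Id}-\Omega\otimes\Omega)$; after using the continuity equation to remove $\partial_t\rho$ and $|\Omega|=1$ (hence $\Omega\cdot\partial_t\Omega=0$) and collecting terms, this yields
\[
  \rho\big(\partial_t\Omega + c_2(\Omega\cdot\nabla_{\bf x})\Omega\big) + \tfrac{\alpha^2}{\lambda^2}(\mathrm{Id}-\Omega\otimes\Omega)\nabla_{\bf x}\rho = 0 ,
\]
the coefficient $\alpha^2/\lambda^2$ being the inverse Von Mises concentration and $c_2$ a ratio of moments of $\psi_\Omega$ against the equilibrium, both identified in (\ref{eq:c_1})--(\ref{eq:c_2}). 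Beyond the existence of the GCI, the remaining labor is the bookkeeping of these moment integrals and the explicit identification of $c_1$ and $c_2$.
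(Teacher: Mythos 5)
Your proposal is correct and follows essentially the same route as the paper: localization of $\overline{\Omega}$ under the hydrodynamic scaling, identification of the equilibria $\rho\,\mathcal{M}_\Omega\mathcal{N}$ as the Gibbs states of the frozen-$\Omega$ kinetic Fokker--Planck operator, construction of the generalized collisional invariant through the adjoint problem $\mathcal{Q}_\Omega^*\psi_\Omega \propto \vec{\tau}(\theta)\times\Omega$ under the parallel-flux constraint, and the final moment computations exploiting the odd symmetry of $\psi$. The existence proof for the GCI that you flag as the main obstacle is exactly where the paper invests its analytic effort (Appendices A1--A2), via a regularized coercive problem plus Poincar\'e-type estimates, i.e.\ the hypocoercivity-flavored argument you anticipate.
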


The so-obtained macroscopic model (\ref{eq:final_system}) has the same form as that
derived from the Vicsek model \cite{degond_continuum_2008}. Indeed, the two models only
differ by the values of their coefficients. This model is a hyperbolic system which bears
some similarities with the Euler system of isothermal compressible gases. There are
however some striking differences. First, the convection speed of the density $\rho$ is
different from the convection speed of the velocity $\Omega$ ($c_1 \neq c_2$ in
general). Moreover, the velocity $\Omega$ is a unit vector and therefore it satisfies the
constraint $|\Omega|=1$. This explains why the pressure term is premultiplied by the
matrix $(\mbox{Id}-\Omega\otimes\Omega)$. This projection matrix guarantees that the
resulting vector is orthogonal to $\Omega$. Consequently, the constraint $|\Omega|=1$ is
preserved by the dynamics. However, the projection matrix leads to a non-conservative
model which cannot be put in conservative form. This intrinsic non-conservation feature is
the macroscopic counterpart of the lack of momentum conservation at the microscopic level
(see below).

The modification of the PTW model leading to the PTWA model has drastically changed the
nature of the macroscopic model. Indeed, the macroscopic limit of the PTW model without
the incorporation of the interactions is of diffusive nature
\cite{degond_large_2008,cattiaux_asymptotic_2010}. By contrast, that of the PTWA model is
of hyperbolic type. Indeed, the scaling (\ref{eq:macro_variables}) is of hydrodynamic
type, the macroscopic time and space scales being of the same order of magnitude. By
contrast, a diffusive scaling would have required $\tilde t = \varepsilon^2 t$ instead
(see \cite{degond_large_2008,cattiaux_asymptotic_2010}).

The similarity with the 'Vicsek Hydrodynamics' also confirms that that the chosen
interaction rule generates alignment since the PTWA model has the same macroscopic limit
as the Vicsek model. At the microscopic scale, the PTWA and Vicsek models look rather
different, whereas, at the macroscopic scale, they are similar. This is an example of how
the derivation of macroscopic model can be used as a tool to reduce and unify different
types of swarming models in classes leading to similar macroscopic models.


\section{Derivation of a macroscopic model}
\label{sec:derivation}
\setcounter{equation}{0}

\subsection{Mean field equation}
\label{subseq:mean_field}

In this section, we briefly summarize the first step of the derivation of the macroscopic
model, namely the derivation of the intermediate mean-field equation (\ref{eq:f}) from the
particle dynamics (\ref{eq:position})-(\ref{eq:curvature}). In order to derive this mean
field equation, we start by looking at the system without the white noise $dB_i^t$ for a
large (but fixed) number of individuals $N$. In this case, the system reduces to a coupled
system of ordinary differential equations. We denote by
$\{X_i(t),\Theta_i(t),K_i(t)\}_{i=1\dots N}$ the solution of this system on a given time
interval. Following the standard methodology (see e.g. the text book
\cite{spohn_large_1991}), we introduce the so-called empirical distribution $f^N$ given
by:
\begin{equation}
  \label{eq:f_N}
  f^N = \frac{1}{N} \sum_{i=1}^N \delta_{X_i(t)} \otimes \delta_{\Theta_i(t)} \otimes \delta_{K_i(t)}.
\end{equation}
We can easily check that this density distribution satisfies the following equation
(weakly):
\begin{displaymath}
  \partial_tf^N + \vec{\tau}(\theta)\cdot\nabla_{{\bf x}}f^N + \kappa\partial_\theta f^N
  + \lambda\partial_\kappa\big[(\overline{\kappa}^N-\kappa)f^N\big] = 0,
\end{displaymath}
with
\begin{displaymath}
  \overline{\kappa}^N = \vec{\tau}(\theta)\times\overline{\Omega}^N({\bf x})
\end{displaymath}
and
\begin{displaymath}
  \overline{\Omega}^N({\bf x}) =  \frac{{\bf J}^N({\bf x})}{|{\bf J}^N({\bf x})|} \quad, \quad \ds
  {\bf J}^N({\bf x}) = \!\!\!\!\!\sum_{j, \, |{\bf x}-{\bf x}_j|<R}\!\!\!
  \vec{\tau}(\theta_j).
\end{displaymath}
The term $J^N$ can be expressed using the empirical distribution $f^N$:
\begin{displaymath}
  {\bf J}^N({\bf x}) = N\, \int_{|{\bf x}-{\bf y}|<R,\,\theta,\kappa}
  \vec{\tau}(\theta)f^N({\bf y},\theta,\kappa)\,d{\bf y} d\theta d\kappa.
\end{displaymath}
Then it is clear that the formal limit $N\rightarrow\infty$ of $f^N$ satisfies the
following equation:
\begin{displaymath}
  \partial_tf + \vec{\tau}(\theta)\cdot\nabla_{{\bf x}}f + \kappa\partial_\theta f +
  \lambda\partial_\kappa\big[(\overline{\kappa}-\kappa)f\big] = 0,
\end{displaymath}
with $\overline{\kappa}$ given by
(\ref{eq:kappa_bar_kinetic}),(\ref{eq:theta_bar_kinetic}).

When the white noise is added, the situation is more complicated. At the particle level
(\ref{eq:position})-(\ref{eq:curvature}), the system becomes a coupled system of
stochastic differential equations. This implies that the empirical distribution $f^N$
given by (\ref{eq:f_N}) becomes a stochastic measure. In this case, formal considerations
suggest that, in the limit $N\rightarrow\infty$, the distribution function $f$ satisfies
the following Fokker-Planck equation (\ref{eq:f}) with $\overline{\kappa}$ given by
(\ref{eq:kappa_bar_kinetic}),(\ref{eq:theta_bar_kinetic}). For related questions, we refer
the reader to
\cite{sznitman_topics_1989,canizo_well-posedness_2009,bolley_stochastic_2010}.

\subsection{Hydrodynamic scaling}
\label{subseq:hydro}

In order to derive a macroscopic equation from the mean-field equation
(\ref{eq:f})-(\ref{eq:theta_bar_kinetic}), we use the hydrodynamic scaling. With this aim,
we introduce the macroscopic variables $\tilde t$ and $\tilde {\bf x}$ defined by
(\ref{eq:macro_variables}).  In the rescaled variables, the distribution function (denoted
by $f^\varepsilon$) is given by $f^\varepsilon(\tilde t,\tilde {\bf x},\theta,\kappa) =
\frac{1}{\varepsilon^2}\,f( t,{\bf x},\theta,\kappa)$. After omitting the tildes, it
satisfies the following equation:
\begin{equation}
  \label{eq:f_ep}
  \varepsilon(\partial_tf^\varepsilon + \vec{\tau}(\theta)\cdot\nabla_{{\bf
      x}}f^\varepsilon) + \kappa\partial_\theta f^\varepsilon +
  \lambda\partial_\kappa\big[(\overline{\kappa}^\varepsilon-\!\kappa)f^\varepsilon\big] =
  \alpha^2 \partial_\kappa^2f^\varepsilon,
\end{equation}
with
\begin{displaymath}
  \overline{\kappa}^\varepsilon = \vec{\tau}(\theta)\times\overline{\Omega}^\varepsilon({\bf x})
\end{displaymath}
and
\begin{equation}
  \label{eq:omega_kinetic}
  \overline{\Omega}^\varepsilon({\bf x}) =  \frac{{\bf J}^\varepsilon({\bf x})}{|{\bf
      J}^\varepsilon({\bf x})|} \quad, \quad \ds
  {\bf J}^\varepsilon({\bf x}) = \int_{|{\bf x}-{\bf y}|<\varepsilon R,\,\theta,\kappa}
  \vec{\tau}(\theta)f^\varepsilon({\bf y},\theta,\kappa)\,d{\bf y} d\theta d\kappa.
\end{equation}
We note that the expression (\ref{eq:omega_kinetic}) of ${\bf J}^\varepsilon$ supposes
that the radius of interaction between the individuals is tied to the microscopic
scale. This assumption translates the fact that in most biological system, each individual
has only access to information about its close neighborhood. Thanks to this assumption, we
can replace the expression of $\overline{\Omega}^\varepsilon$ by a local expression. This
is precisely stated in the following lemma, the proof of which is obvious and omitted.

\begin{lemma}
  We have the expansion:
  \begin{displaymath}
    \overline{\Omega}^\varepsilon = \Omega_{f^\varepsilon} + O(\varepsilon^2),
  \end{displaymath}
  where
  \begin{displaymath}
    \Omega_{f^\varepsilon}({\bf x}) = \frac{{\bf j}^\varepsilon({\bf x})}{|{\bf
        j}^\varepsilon({\bf x})|}\; \quad \text{and} \quad {\bf j}^\varepsilon({\bf x})=
    \int_{\theta,\kappa} \vec{\tau}(\theta) f^\varepsilon ({\bf x},\theta,\kappa)\,
    d\theta d\kappa.
  \end{displaymath}
\end{lemma}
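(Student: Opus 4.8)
The plan is to Taylor-expand the spatial flux in the variable $\mathbf{y}$ around the point $\mathbf{x}$ and to exploit the rotational symmetry of the ball of integration to eliminate the first-order correction. First I would introduce the pointwise flux density $\mathbf{j}^\varepsilon(\mathbf{y}) = \int_{\theta,\kappa}\vec{\tau}(\theta)f^\varepsilon(\mathbf{y},\theta,\kappa)\,d\theta\,d\kappa$, so that definition (\ref{eq:omega_kinetic}) reads ${\bf J}^\varepsilon(\mathbf{x}) = \int_{|\mathbf{x}-\mathbf{y}|<\varepsilon R}\mathbf{j}^\varepsilon(\mathbf{y})\,d\mathbf{y}$. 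The natural rescaling $\mathbf{y} = \mathbf{x}+\varepsilon R\,\mathbf{z}$ with $|\mathbf{z}|<1$ then gives $ {\bf J}^\varepsilon(\mathbf{x}) = (\varepsilon R)^2\int_{|\mathbf{z}|<1}\mathbf{j}^\varepsilon(\mathbf{x}+\varepsilon R\,\mathbf{z})\,d\mathbf{z}$, which isolates the small parameter in the argument of $\mathbf{j}^\varepsilon$.

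The second step is the expansion proper. Writing $\mathbf{j}^\varepsilon(\mathbf{x}+\varepsilon R\,\mathbf{z}) = \mathbf{j}^\varepsilon(\mathbf{x}) + \varepsilon R\,\mathbf{z}\cdot\nabla_{\bf x}\mathbf{j}^\varepsilon(\mathbf{x}) + O(\varepsilon^2)$ and integrating over the unit disk, the crucial observation is that the first-order term vanishes, since $\int_{|\mathbf{z}|<1}\mathbf{z}\,d\mathbf{z}=0$ by oddness of the integrand over a symmetric domain. Hence the naive $O(\varepsilon)$ correction is absent and one is left with $ {\bf J}^\varepsilon(\mathbf{x}) = \pi(\varepsilon R)^2\big[\mathbf{j}^\varepsilon(\mathbf{x}) + O(\varepsilon^2)\big]$, where $\pi$ is the area of the unit disk. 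If desired I would make the remainder explicit using $\int_{|\mathbf{z}|<1}z_iz_j\,d\mathbf{z}=\tfrac{\pi}{4}\delta_{ij}$, which identifies the next term as $\tfrac{(\varepsilon R)^2}{8}\Delta_{\bf x}\mathbf{j}^\varepsilon(\mathbf{x})$, but for the statement only its order matters.

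The third step is normalization. Since the scalar prefactor $\pi(\varepsilon R)^2$ is positive, it cancels under division by the norm, so that $\overline{\Omega}^\varepsilon(\mathbf{x}) = \big(\mathbf{j}^\varepsilon(\mathbf{x}) + O(\varepsilon^2)\big)\big/\big|\mathbf{j}^\varepsilon(\mathbf{x}) + O(\varepsilon^2)\big|$. The map $\mathbf{v}\mapsto \mathbf{v}/|\mathbf{v}|$ is smooth away from the origin, so at any point where $\mathbf{j}^\varepsilon(\mathbf{x})\neq0$ a perturbation of order $\varepsilon^2$ in its argument produces a perturbation of order $\varepsilon^2$ in its value. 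This yields exactly $\overline{\Omega}^\varepsilon = \Omega_{f^\varepsilon} + O(\varepsilon^2)$ with $\Omega_{f^\varepsilon}=\mathbf{j}^\varepsilon/|\mathbf{j}^\varepsilon|$, as claimed.

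The computation is entirely routine, which is why the authors omit it; the only points requiring a word are the two standing assumptions that make each step legitimate. One needs the nondegeneracy $\mathbf{j}^\varepsilon(\mathbf{x})\neq0$, without which the normalization fails to be differentiable — but this is precisely the region where the mean direction is meaningfully defined — and one needs enough spatial regularity of $f^\varepsilon$ (hence of $\mathbf{j}^\varepsilon$) to control the Taylor remainder uniformly. The genuine content, such as it is, lies in the vanishing of the first-order moment of $\mathbf{z}$ over the symmetric ball, which is what upgrades the error from $O(\varepsilon)$ to the advertised $O(\varepsilon^2)$ and thereby permits replacing the nonlocal average $\overline{\Omega}^\varepsilon$ by the local quantity $\Omega_{f^\varepsilon}$ in the subsequent asymptotic analysis.
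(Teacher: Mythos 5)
Your proof is correct and is precisely the argument the authors had in mind when they declared the proof ``obvious and omitted'': rescale the ball of radius $\varepsilon R$, Taylor-expand $\mathbf{j}^\varepsilon$, use the vanishing of the first moment of $\mathbf{z}$ over the symmetric disk to get an $O(\varepsilon^2)$ error, and conclude by smoothness of $\mathbf{v}\mapsto\mathbf{v}/|\mathbf{v}|$ away from the origin. Your explicit identification of the standing assumptions ($\mathbf{j}^\varepsilon\neq 0$ and spatial regularity of $f^\varepsilon$) is a welcome addition to what the paper leaves implicit.
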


\medskip
\noindent
Finally, we can simplify (\ref{eq:f_ep}) using the equality:
\begin{displaymath}
  \vec{\tau}(\theta)\times\Omega = \sin(\overline{\theta}-\theta)
\end{displaymath}
with $\overline{\theta}$ such that:
\begin{displaymath}
  \vec{\tau}(\overline{\theta}) = \Omega_{f^\varepsilon}.
\end{displaymath}
With these notations, equation (\ref{eq:f_ep}) can be written as:
\begin{equation}
  \label{eq:f_ep2}
  \varepsilon\big(\partial_tf^\varepsilon + \vec{\tau}(\theta)\cdot\nabla_{{\bf
      x}}f^\varepsilon\big) = Q(f^\varepsilon) + O(\varepsilon^2)
\end{equation}
with the operator $Q$ (below referred to as the 'collision operator') defined by:
\begin{equation}
  \label{eq:Q}
  Q(f) = -\kappa\partial_\theta f - \lambda\sin(\overline{\theta}-\theta)\,\partial_\kappa
  f + \lambda\partial_\kappa (\kappa f) + \alpha^2 \partial_\kappa^2f,
\end{equation}
where $\vec{\tau}(\overline{\theta}) = \Omega_f({\bf x})$ defined as:
\begin{equation}
  \label{eq:Omega}
  \Omega_f({\bf x}) = \frac{{\bf j}({\bf x})}{|{\bf j}({\bf x})|} \quad , \quad {\bf
    j}({\bf x})=\int_{\theta,\kappa} \vec{\tau}(\theta) f({\bf x},\theta,\kappa)\,d\theta
  d\kappa.
\end{equation}
In the sequel, we will drop the $O(\varepsilon^2)$ remainder which has no influence in the
final result.

\subsection{Study of the collision operator}
\label{subseq:collision_operator}

\subsubsection{Equilibria}
\label{subsubsec:equilibria}

In order to study the limit $\varepsilon\rightarrow0$ of the solution $f^\varepsilon$ of
(\ref{eq:f_ep2}), we first have to determine the equilibria of the operator $Q$ defined by
(\ref{eq:Q}). With this aim, we notice that $Q$ can be decomposed as a sum of a formally
skew-adjoint operator and of a formally self-adjoint operator. For the skew-adjoint part,
we introduce the function:
\begin{displaymath}
  H(\theta,\kappa) = -\lambda \cos \theta + \frac{\kappa^2}{2}
\end{displaymath}
and we adopt the convention that for any function $h(\theta,\kappa)$:
\begin{equation}
  \label{eq:convention}
  h_\Omega(\theta,\kappa) = h_{\overline{\theta}}(\theta,\kappa) = h(\theta-\overline{\theta},\kappa),
\end{equation}
with $\vec{\tau}(\overline{\theta})=\Omega$. Using these notations, for any smooth
function $f$, the skew-adjoint part of $Q$ can be written as:
\begin{equation}
  \label{eq:anti_ajdoint}
  -\kappa\partial_\theta f - \lambda\sin(\overline{\theta}-\theta)\,\partial_\kappa f
  = \partial_\theta H_{\overline{\theta}} \, \, \partial_\kappa f - \partial_\kappa
  H_{\overline{\theta}} \, \, \partial_\theta f =\{H_{\overline{\theta}},\,f\}_{(\theta,\kappa)},
\end{equation}
using the Poisson Bracket formalism $\{ \cdot , \cdot \}_{(\theta,\kappa)}$ in the
$(\theta,\kappa)$ space. Therefore, any function of the form $g(H_{\overline{\theta}})$
satisfies $\{H_{\overline{\theta}},\,g(H_{\overline{\theta}})\}=0$. On the other hand, the
self-adjoint part of $Q$ satisfies:
\begin{equation}
  \label{eq:self_adjoint}
  \lambda\partial_\kappa (\kappa f) + \alpha^2 \partial_\kappa^2f =
  \alpha^2 \partial_\kappa \left(\mathcal{N} \partial_\kappa \left(\frac{f}{\mathcal{N}}
    \right) \right),
\end{equation}
with $\mathcal{N}$ the Gaussian distribution with zero mean and variance $\alpha^2/\lambda$:
\begin{equation}
  \label{eq:Gauss}
  \mathcal{N}(\kappa) = \sqrt{\frac{\lambda}{2\pi \alpha^2}}\,\exp \left(-\frac{\lambda
      \kappa^2}{2 \alpha^2} \right).
\end{equation}
In particular, the Gaussian $\mathcal{N}$ is in the kernel of the self-adjoint part of
$Q$. We combine our two previous observations to define the function:
\begin{equation}
  \label{eq:mu_temp}
  \mu(\theta,\kappa) = C \exp \left(-\frac{\lambda}{\alpha^2}H \right) = C \exp
  \left(-\frac{\lambda}{\alpha^2} \left(  \frac{\kappa^2}{2} - \lambda \cos \theta \right)
  \right),
\end{equation}
where $C$ is the normalization constant such that $\int_{(\theta, \kappa)} \mu(\theta,\kappa) \, d \theta \, d \kappa =
1$. This normalization constant is explicitly given below. The translates
$\mu_{\overline{\theta}}$ of $\mu$ in the sense of definition (\ref{eq:convention}) are of the form
$g(H_{\overline{\theta}})$ and are Gaussian distributions in $\kappa$ with variance $\alpha / \sqrt \lambda$. It
follows from a simple computation that $\mu_{\overline{\theta}}$ is an equilibrium for $Q$
(i.e. $Q(\mu_{\overline{\theta}})=0$), for all real values of $\overline{\theta}$.

To simplify the analysis, we introduce the Von Mises distribution $\mathcal{M}$:
\begin{equation}
  \label{eq:Von}
  \ds \mathcal{M}(\theta) = C_0\,\exp\left(\frac{\lambda^2}{\alpha^2} \cos \theta\right),
\end{equation}
where $C_0 = (2\pi I_0(\frac{\lambda^2}{\alpha^2}))^{-1}$ is the normalization constant
such that $\int_{\theta} \mathcal{M}(\theta) \, d \theta = 1$ (with $I_0$ the modified
Bessel function of order $0$).  Therefore, $\mu$ can be written as the product of
$\mathcal{M}$ given by (\ref{eq:Von}) and $\mathcal{N}$ given by (\ref{eq:Gauss}):
\begin{equation}
  \label{eq:mu}
  \mu(\theta,\kappa) = \mathcal{M}(\theta) \mathcal{N}(\kappa),
\end{equation}
and the normalization constant $C$ is given by $C=C_0 \sqrt{\lambda/(2\pi\alpha^2)}$. We
summarize our analysis of $Q$ in the following proposition.

\begin{proposition}
  \begin{itemize}
  \item[i)] The operator $Q$ satisfies:
    \begin{equation}
      \label{eq:entropy}
      \int_{\theta,\kappa} Q(f)\,\frac{f}{\mu_{\overline{\theta}}}\,d\theta d\kappa = - \alpha^2 \int_{\theta,\kappa}
      \frac{\mathcal{N}}{\mathcal{M}_{\overline{\theta}}} \left| \partial_\kappa \left( \frac{f}{\mathcal{N}} 
        \right)\right|^2\,d\theta d\kappa \leq 0,
    \end{equation}
    with $\mu$ defined by (\ref{eq:mu}) and $\overline{\theta}$ such that
    $\vec{\tau}(\overline{\theta})=\Omega_f$ with $\Omega_f$ defined in (\ref{eq:Omega}).
  \item[ii)] The equilibria of $Q$ (i.e. the functions $f(\theta, \kappa) \geq 0$ such
    that $Q(f)=0$) form a two-dimensional manifold $\mathcal{E}$ given by:
    \begin{equation}
      \label{eq:equil}
      \mathcal{E} = \{\rho\, \mu_{\overline{\theta}} \quad | \quad \rho \in
      \mathbb{R}^+\;, \overline{\theta} \in (-\pi,\pi]\},
    \end{equation}
    where $\rho$ is the total mass and $\overline{\theta}$ the direction of the flux of
    $\rho\, \mu_{\overline{\theta}}$.
  \end{itemize}
\end{proposition}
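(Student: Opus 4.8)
The plan is to prove both parts by exploiting the decomposition of $Q$ into its skew-adjoint part $\{H_{\overline{\theta}},\cdot\}$ and its self-adjoint part $\alpha^2\partial_\kappa(\mathcal{N}\partial_\kappa(\cdot/\mathcal{N}))$ already established in (\ref{eq:anti_ajdoint}) and (\ref{eq:self_adjoint}). Throughout, $\overline{\theta}$ is the self-consistent angle with $\vec{\tau}(\overline{\theta})=\Omega_f$, and I will use freely that $\mu_{\overline{\theta}}=C\exp(-\frac{\lambda}{\alpha^2}H_{\overline{\theta}})$ is a function of $H_{\overline{\theta}}$, so that $\{H_{\overline{\theta}},\mu_{\overline{\theta}}\}=0$. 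Boundary terms in the integrations by parts vanish: in $\theta$ by $2\pi$-periodicity, and in $\kappa$ by the Gaussian decay carried by $\mathcal{N}$.

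For part i), I would write $f=u\,\mu_{\overline{\theta}}$ and handle the two parts of $Q$ separately. For the skew-adjoint part, the Leibniz rule for the Poisson bracket gives $\{H_{\overline{\theta}},f\}=\mu_{\overline{\theta}}\{H_{\overline{\theta}},u\}$, so its contribution to the entropy integral is $\int\mu_{\overline{\theta}}\,u\,\{H_{\overline{\theta}},u\}=\tfrac12\int\mu_{\overline{\theta}}\{H_{\overline{\theta}},u^2\}$; integrating by parts and using the antisymmetry of the bracket turns this into $-\tfrac12\int u^2\{H_{\overline{\theta}},\mu_{\overline{\theta}}\}=0$. For the self-adjoint part, I would integrate by parts once in $\kappa$ against $f/\mu_{\overline{\theta}}=(f/\mathcal{N})/\mathcal{M}_{\overline{\theta}}$; since $\mathcal{M}_{\overline{\theta}}$ is independent of $\kappa$, the factor $1/\mathcal{M}_{\overline{\theta}}$ passes through the derivative and one obtains exactly $-\alpha^2\int\frac{\mathcal{N}}{\mathcal{M}_{\overline{\theta}}}|\partial_\kappa(f/\mathcal{N})|^2$, which is $\leq 0$ because $\mathcal{N},\mathcal{M}_{\overline{\theta}}>0$.

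For part ii), suppose $Q(f)=0$. Multiplying by $f/\mu_{\overline{\theta}}$, integrating, and using part i) forces the dissipation to vanish, hence $\partial_\kappa(f/\mathcal{N})=0$; thus $f(\theta,\kappa)=\mathcal{N}(\kappa)\phi(\theta)$ for some $\phi\geq 0$. Substituting this back into $Q(f)=0$, the self-adjoint part already vanishes, so the skew-adjoint part must vanish too: a direct computation of $\{H_{\overline{\theta}},\mathcal{N}\phi\}$, using $\partial_\kappa\mathcal{N}=-\frac{\lambda}{\alpha^2}\kappa\mathcal{N}$, yields $-\kappa\mathcal{N}\big(\phi'+\frac{\lambda^2}{\alpha^2}\sin(\theta-\overline{\theta})\phi\big)=0$. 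This is a first-order linear ODE whose solution is $\phi(\theta)\propto\exp(\frac{\lambda^2}{\alpha^2}\cos(\theta-\overline{\theta}))$, i.e. $\phi=\rho\,\mathcal{M}_{\overline{\theta}}$, so that $f=\rho\,\mu_{\overline{\theta}}$.

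Finally I would close the loop on self-consistency: for $f=\rho\,\mu_{\overline{\theta}}$ one must check that the angle built into $Q$ through $\vec{\tau}(\overline{\theta})=\Omega_f$ coincides with the subscript. Computing ${\bf j}=\int\vec{\tau}(\theta)\mathcal{M}(\theta-\overline{\theta})\,d\theta$ and using the evenness of $\mathcal{M}$, the odd $\sin$-contributions cancel and ${\bf j}$ is a positive multiple of $\vec{\tau}(\overline{\theta})$, so $\Omega_f=\vec{\tau}(\overline{\theta})$ as required. The equilibrium set is therefore exactly $\{\rho\,\mu_{\overline{\theta}}\}$, parametrized by $(\rho,\overline{\theta})\in\mathbb{R}^+\times(-\pi,\pi]$, which is the two-dimensional manifold $\mathcal{E}$. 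I expect the main subtlety to be this self-consistency check and the careful bookkeeping of which $\overline{\theta}$ appears where, rather than any single hard estimate.
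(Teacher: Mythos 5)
Your proposal is correct and follows essentially the same route as the paper: decompose $Q$ into the Poisson-bracket part (which contributes nothing to the entropy integral because the bracket is a derivation, is skew-adjoint, and annihilates functions of $H_{\overline{\theta}}$) and the self-adjoint part (which yields the dissipation term after one integration by parts in $\kappa$), then for (ii) use the vanishing dissipation to reduce to $f=\varphi(\theta)\mathcal{N}(\kappa)$, solve the resulting first-order ODE to get $\varphi\propto\mathcal{M}_{\overline{\theta}}$, and close with the self-consistency check $\Omega_f=\vec{\tau}(\overline{\theta})$ via the positivity of $I_1/I_0$. The only differences are cosmetic (you write $f=u\,\mu_{\overline{\theta}}$ where the paper manipulates $\expo^{\pm\frac{\lambda}{\alpha^2}H_{\overline{\theta}}}$ directly), so there is nothing to add.
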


\begin{proof} (i) Combining (\ref{eq:anti_ajdoint}) and (\ref{eq:self_adjoint}), we find:
  \begin{equation}
    \label{eq:Q_simplify}
    Q(f) = \{H_{\overline{\theta}},\,f \} + \alpha^2 \partial_\kappa
    \left(\mathcal{N} \partial_\kappa \left(\frac{f}{\mathcal{N}}\right) \right).
  \end{equation}
  Using (\ref{eq:mu_temp}), the fact that the Poisson bracket with $f$ is a derivation and
  is a skew-adjoint operator, we find:
  \begin{eqnarray*}
    \int_{\theta,\kappa} \{H_{\overline{\theta}},f\}\,\frac{f}{\mu}\,d\theta d\kappa &=&
    \int_{\theta,\kappa} \frac{\alpha^2}{\lambda} \expo^{-\frac{\lambda}{\alpha^2}H_{\overline{\theta}}}
    \{\expo^{\frac{\lambda}{\alpha^2}H_{\overline{\theta}}},f\} \frac{f}{\mu}\,d\theta d\kappa
    \\
    &=& \frac{\alpha^2}{\lambda} \frac{1}{C}\,  \int_{\theta,\kappa}
    \{\expo^{\frac{\lambda}{\alpha^2}H_{\overline{\theta}}},f\} f\,d\theta d\kappa =0.
  \end{eqnarray*}
  Then, using the formulation of $Q$ in (\ref{eq:Q_simplify}), we easily deduce the
  equality (\ref{eq:entropy}) by applying Green's formula.
  
  \medskip
  \noindent
  (ii) If $f$ is an equilibrium for $Q$ (i.e. $Q(f)=0$) using the equality
  (\ref{eq:entropy}) we have:
  \begin{displaymath}
    \int_{\theta,\kappa} \frac{\mathcal{N}(\kappa)}{\mathcal{M}_{\overline{\theta}} (\theta)}
    \left| \partial_\kappa \left( \frac{f}{\mathcal{N}} \right)\right|^2\,d\theta d\kappa
    = 0,
  \end{displaymath}
  which means that $f$ is proportional to $\mathcal{N}$ as a function of
  $\kappa$. Therefore, we can write:
  \begin{displaymath}
    f(\theta,\kappa) = \varphi(\theta)\mathcal{N}(\kappa).
  \end{displaymath}
  Using again that $f$ is an equilibrium, we have:
  \begin{displaymath}
    -\kappa\varphi'(\theta) + \lambda\sin(\overline{\theta}-\theta)
    \frac{\lambda\kappa}{\alpha^2} \varphi(\theta) = 0, \quad \text{for all } \kappa.
  \end{displaymath}
  Solving this differential equation leads to $\varphi(\theta) = C \mathcal{M}_{\overline{\theta}}(\theta)$
  with $\mathcal{M}$ given by (\ref{eq:Von}). This yields $f= K
  \mathcal{M}_{\overline{\theta}} \, \mathcal{N}$ with $K\geq 0$ a constant which proves that
  $f$ is of the form $f = \rho \mu_{\theta_0}$, with $\rho \geq 0$ and $\theta_0 \in
  (-\pi, \pi]$.
  
  \medskip
  \noindent
  Reciprocally, we show that a function of the form $f=\rho \mu_{\theta_0}$ with $\rho
  \geq 0$ and $\theta_0 \in (-\pi, \pi]$ is an equilibrium. For this purpose, the only
  thing to show is that the associated $\Omega_f = \tau (\overline{\theta})$ is such that
  $\overline{\theta}=\theta_0$. We compute
  \begin{eqnarray*} 
    {\bf j}_f &=& \int_{(\theta,\kappa)}  \rho \, \mu_{\theta_0} \, \tau(\theta) \, d\theta \, d \kappa \\
    &=& \rho \int_{(\theta,\kappa)} {\mathcal N}(\kappa) \,  C_0 \, \exp \left(
      \frac{\lambda^2}{\alpha^2} \cos (\theta - \theta_0) \right) \,
    \left( \begin{array}{c} \cos \theta \\ \sin \theta \end{array} \right) \, d\theta \, d\kappa.
  \end{eqnarray*} 
  Then, by the change of variables $\phi = \theta - \theta_0$ and using oddness
  considerations, we obtain
  \begin{displaymath}
    {\bf j}_f = \rho C_0 \, \int_{\kappa} \exp \left( \frac{\lambda^2}{\alpha^2} \cos \phi
    \right) \, \cos \phi \, d\phi  \, \, \tau(\theta_0)= \rho
    \frac{I_1(\frac{\lambda^2}{\alpha^2})}{I_0(\frac{\lambda^2}{\alpha^2})} \,
    \tau(\theta_0),
  \end{displaymath}
  where $I_1$ is the modified Bessel function of order 1. Remembering that $\Omega_f =
  {\bf j}_f / |{\bf j}_f|$, we deduce that $\overline{\theta} = \pm \theta_0$, with the sign
  being that of ${I_1(\frac{\lambda^2}{\alpha^2})}/{I_0(\frac{\lambda^2}{\alpha^2})}$. A
  simple inspection of the integral giving $I_1$ shows that this sign is positive and that
  $\overline{\theta} = \theta_0$, which ends the proof.
\end{proof}

\subsubsection{Generalized collisional invariant}
\label{subsubseq:collision_invar}

The next step to determine the hydrodynamic limit of $f^\varepsilon$ (\ref{eq:f_ep2}) is
to look at the collision invariants of the operator $Q$, i.e. the functions $\psi$ which
satisfy:
\begin{displaymath}
  \int_{\theta,\kappa} Q(f) \psi\,d\theta d\kappa = 0, \qquad \text{ for all } f.
\end{displaymath}
Clearly, $\psi=1$ is collisional invariant. But there is no other obvious collisional
invariant. However, since the equilibria of $Q$ (\ref{eq:equil}) form a two dimensional
space, we need two conserved quantities to derive a macroscopic model. To overcome this
problem, we use the notion of {\it generalized collisional invariant} developed in
\cite{degond_continuum_2008}.

In this paper, we use a slightly different definition from
\cite{degond_continuum_2008}. Indeed, the result of \cite{degond_continuum_2008} was
slightly incorrect and the present definition is designed to make the statement
correct. We first introduce the following definition:

\begin{definition}
  For a given $\Omega \in {\mathbb S}^1$ and a given distribution function $f(\theta,
  \kappa)$, we define the 'extended' collision operator ${\mathcal Q}_\Omega(f)$ by:
  \begin{displaymath}
    {\mathcal Q}_\Omega(f) = \{H_{\Omega},\,f \} + \alpha^2 \partial_\kappa
    \left(\mathcal{N} \partial_\kappa \left(\frac{f}{\mathcal{N}}\right) \right),
  \end{displaymath}
  where we recall the notation (\ref{eq:convention}).
  \label{def:ext_coll}
\end{definition}

\noindent
Obviously, we have
\begin{equation}
  Q(f) = {\mathcal Q}_{\Omega_f}(f) ,
  \label{eq:rel_coll_ext_coll}
\end{equation}
recalling the definition (\ref{eq:Omega}) of $\Omega_f$. For fixed $\Omega$, the operator
${\mathcal Q}_\Omega(f)$ is linear. We now define a Generalized Collision Invariant.

\begin{definition}
  For a given unit vector $\Omega \in \mathbb{S}^1$, a function $\psi_\Omega$ is called a
  Generalized Collisional Invariant (GCI) if it satisfies:
  \begin{equation}
    \label{eq:generalized_collisional_invariant}
    \int_{\theta,\kappa} {\mathcal Q}_\Omega(f) \,  \psi_\Omega \, d\theta \, d\kappa = 0,
    \quad \text{ for all } f \text{ such that } \Omega_f= \pm \Omega,
  \end{equation}
\end{definition}

\noindent
Using definition (\ref{eq:generalized_collisional_invariant}) with $\Omega_f = \Omega$ and
(\ref{eq:rel_coll_ext_coll}), we note that if $\psi_\Omega$ is a GCI, it satisfies
\begin{displaymath}
  \int_{\theta,\kappa} Q(f) \,  \psi_{\Omega_f} \, d\theta \, d\kappa = 0.
\end{displaymath}
This property is crucial for the establishment of the hydrodynamic limit. 

\medskip For a given $\Omega \in {\mathbb S}^1$, the adjoint operator to ${\mathcal
  Q}_\Omega$ is given by:
\begin{displaymath}
  {\mathcal Q}_\Omega^* (\psi) = \kappa\partial_\theta\psi +
  \lambda\sin(\overline{\theta}-\theta)\,\partial_\kappa\psi -
  \lambda\kappa\partial_\kappa\psi + \alpha^2 \partial_\kappa^2\psi,
\end{displaymath}
with $\overline{\theta}$ such that $\Omega = \vec{\tau} (\overline{\theta})$. This operator ${\mathcal
  Q}_\Omega^*$ enables us to find an explicit equation for the GCI $\psi_\Omega$ as stated
in the following lemma.

\begin{lemma}
  For a given unit vector $\Omega \in \mathbb{S}^1$, a function $\psi_\Omega$ is a
  generalized collisional invariant if and only if it there exists a constant $\beta \in
  {\mathbb R}$ such that:
  \begin{equation}
    \label{eq:Q_adjoint_sin}
    {\mathcal Q}_\Omega^* (\psi_\Omega) = \beta\,\vec{\tau}(\theta) \times \Omega.
  \end{equation}
\end{lemma}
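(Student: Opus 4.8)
The plan is to reduce the GCI condition to a statement about linear functionals and then invoke the elementary duality principle that a linear form which vanishes on the kernel of another nonzero linear form must be proportional to it.

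First I would pass to the adjoint. Since $\mathcal{Q}_\Omega$ is linear, the defining integral of the GCI can be rewritten, using the adjoint operator $\mathcal{Q}_\Omega^*$ recorded above together with Green's formula, as
\[
  \int_{\theta,\kappa} \mathcal{Q}_\Omega(f)\,\psi_\Omega\,d\theta\,d\kappa = \int_{\theta,\kappa} f\,\mathcal{Q}_\Omega^*(\psi_\Omega)\,d\theta\,d\kappa .
\]
Thus $\psi_\Omega$ being a GCI amounts to saying that the linear functional $L(f) := \int_{\theta,\kappa} f\,\mathcal{Q}_\Omega^*(\psi_\Omega)\,d\theta\,d\kappa$ vanishes on the admissible set $\{f : \Omega_f = \pm\Omega\}$. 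Next I would make the constraint explicit and linear: writing $\Omega = \vec{\tau}(\overline{\theta})$ and using $\vec{\tau}(\theta)\times\vec{\tau}(\overline{\theta}) = \sin(\overline{\theta}-\theta)$, one has $\mathbf{j}_f\times\Omega = \int_{\theta,\kappa} f\,(\vec{\tau}(\theta)\times\Omega)\,d\theta\,d\kappa =: M(f)$. The key observation, and the reason the definition carries $\pm\Omega$, is that the nonlinear condition $\Omega_f = \pm\Omega$ is equivalent to $\mathbf{j}_f$ being parallel to $\Omega$, i.e.\ to the single linear constraint $M(f)=0$ (together with $\mathbf{j}_f\neq 0$, needed only so that $\Omega_f$ is defined). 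Admitting both signs turns what would otherwise be a half-space condition into the genuine hyperplane $\ker M$; this is precisely the point that makes the statement correct.

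The crux is then the duality argument. I would first show that $L$ in fact vanishes on all of $\ker M$, not merely on the subset $\{f : M(f)=0,\ \mathbf{j}_f\neq 0\}$: given $g\in\ker M$ with $\mathbf{j}_g=0$, choosing any $h\in\ker M$ with $\mathbf{j}_h\neq0$ gives $g+th\in\ker M$ and $\mathbf{j}_{g+th}\neq0$ for $t\neq0$, so that $L(g+th)=0$, whence $L(g)=0$ on letting $t\to0$. Since $\sin(\overline{\theta}-\theta)$ is not identically zero, $M\not\equiv0$, and the inclusion $\ker M\subseteq\ker L$ yields a scalar $\beta$ with $L=\beta M$, i.e.
\[
  \int_{\theta,\kappa} f\,\mathcal{Q}_\Omega^*(\psi_\Omega)\,d\theta\,d\kappa = \beta\int_{\theta,\kappa} f\,(\vec{\tau}(\theta)\times\Omega)\,d\theta\,d\kappa \quad\text{for all } f,
\]
which is the pointwise identity $\mathcal{Q}_\Omega^*(\psi_\Omega)=\beta\,\vec{\tau}(\theta)\times\Omega$.

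Finally the converse is immediate and I would record it for completeness: if $\mathcal{Q}_\Omega^*(\psi_\Omega)=\beta\,\vec{\tau}(\theta)\times\Omega$, then for any $f$ with $\Omega_f=\pm\Omega$ the adjoint relation gives $\int_{\theta,\kappa}\mathcal{Q}_\Omega(f)\,\psi_\Omega\,d\theta\,d\kappa=\beta\,(\mathbf{j}_f\times\Omega)=0$, so $\psi_\Omega$ is a GCI. The entire difficulty sits in the forward direction, specifically in correctly identifying the admissible set as the hyperplane $\ker M$ and in justifying the passage from the dense subset to the whole hyperplane; the adjoint computation and the converse are routine.
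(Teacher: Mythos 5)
Your proof is correct and follows essentially the same route as the paper's: pass to the adjoint via Green's formula, recognize the constraint $\Omega_f=\pm\Omega$ as the linear condition ${\bf j}_f\times\Omega=0$, and invoke the elementary duality lemma that a linear form vanishing on the kernel of another nonzero linear form is proportional to it. Your extra step extending the vanishing of $L$ from the set $\{f:\ M(f)=0,\ {\bf j}_f\neq 0\}$ to the whole hyperplane $\ker M$ is a small refinement that the paper passes over silently but that does tighten the argument.
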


\begin{proof}
  Let $f(\theta, \kappa)$ be such that $\Omega_f = \pm \Omega$. This is equivalent to
  saying that there exists a constant $C \in {\mathbb R}$ such that ${\bf j}_f = C \Omega$
  (see (\ref{eq:Omega}) for the definition of ${\bf j}_f$), or in other words, that ${\bf
    j}_f \times \Omega = 0$. Now, if $\psi$ satisfies (\ref{eq:Q_adjoint_sin}), we have,
  for such a function~$f$:
  \begin{eqnarray*}
    \int_{\theta,\kappa} {\mathcal Q}_{\Omega} (f) \, \psi \, d\theta d\kappa 
    &=& \int_{\theta,\kappa} f \, Q_\Omega^* (\psi) \,d\theta d\kappa \\    &=& \beta
    \int_{\theta,\kappa} \!\!f \vec{\tau}(\theta) \times \Omega\, d\theta d\kappa = \beta
    \,  {\bf j}_f \times \Omega = 0,
  \end{eqnarray*}
  and $\psi$ is a GCI associated to $\Omega$.
  
  \noindent
  Reciprocally, if $\psi_\Omega$ is a GCI associated to $\Omega$, we have:
  \begin{equation*}
    \int_{\theta,\kappa} {\mathcal Q}_\Omega(f) \,  \psi_\Omega \, d\theta \, d\kappa = 0 =    
    \int_{\theta,\kappa} f {\mathcal Q}_\Omega^*(\psi_\Omega) \, d\theta d\kappa 
  \end{equation*}
  for all $f(\theta,\kappa)$ such that ${\bf j}_f \times \Omega = 0$. We deduce that, for all $f$, 
  \begin{equation}
    \label{eq:relation}
    {\bf j}_f \times \Omega = 0 \quad \Longrightarrow \quad \int_{\theta,\kappa} f
    Q_\Omega^*(\psi_\Omega)\,d\theta d\kappa =0.
  \end{equation}
  The two expressions appearing in (\ref{eq:relation}) are linear forms acting on $f$. By
  an elementary lemma \cite{brezis_analyse_1983}, the one appearing in the right-hand side
  is proportional to the one appearing in the left-hand side, with a proportionality
  coefficient $\beta \in {\mathbb R}$. Expressing this proportionality gives:
  \begin{equation}
    \label{eq:proportionality}
    \int_{\theta,\kappa} f ( Q_\Omega^*(\psi_\Omega) - \vec{\tau} (\theta) \times \Omega)
    \,d\theta d\kappa  = 0,
  \end{equation}
  for all $f$ without any restriction. (\ref{eq:proportionality}) yields
  (\ref{eq:Q_adjoint_sin}), which concludes the proof.
\end{proof}

It remains to prove the existence of GCI's, or, in other words, to prove the existence of
solutions to equation (\ref{eq:Q_adjoint_sin}). With this aim, we use the Hilbert space
$L_\mu^2$ equipped with the scalar product $<.,.>_\mu$ defined by:
\begin{eqnarray}
  \nonumber
  & & L^2_\mu = \{f(\theta,\kappa) \;/\; \int_{\theta,\kappa} |f|^2\,\mu\,d\theta d\kappa< + \infty\}, \\
  \label{eq:sca_prod_L2_mu}
  & &<f,\,g>_\mu = \int_{\theta,\kappa} f g\,\mu\,d\theta d\kappa.
\end{eqnarray}
Below, we will also use the notation:
\begin{equation}
  \label{eq:sca_mu}
  <g>_\mu = \int_{\theta,\kappa} g(\theta,\kappa)\,\mu(\theta,\kappa)\,d\theta d\kappa.
\end{equation}
We define the hyperplane $E$:
\begin{displaymath}
  E = \{f \in L^2_\mu(\theta,\kappa) \;/\; \int_{\theta,\kappa} f\,\mu \,d\theta d\kappa = 0 \}
\end{displaymath}
and the linear operator $\mathcal{L}$:
\begin{equation}
  \label{eq:L}
  \mathcal{L}\psi = \kappa\partial_\theta \psi - \lambda\sin \theta \partial_\kappa\psi -
  \lambda\kappa\partial_\kappa\psi + \alpha^2\partial_\kappa^2\psi
\end{equation}
with domain $D(\mathcal{L})$ given by:
\begin{displaymath}
  D(\mathcal{L}) = \{ f \in L^2_\mu \; / \; \mathcal{L}f \in L^2_\mu\}.
\end{displaymath}
We have the following lemma: 

\begin{lemma}
  \label{lem:sol_eq_elliptic}
  (i) Let $\chi \in L^2_\mu$.  A necessary condition for the existence of a solution $\psi
  \in D(\mathcal{L})$ of problem
  \begin{equation}
    \label{eq:elliptic_eq}
    \mathcal{L}\psi = \chi,
  \end{equation}
  is that $\chi \in E$ or in other words, that $\chi$ satisfies the solvability condition
  $\int_{\theta,\kappa} \chi \,\mu \,d\theta d\kappa$ $=~0$.
  
  \noindent
  (ii) For all $\chi \in E$, the problem (\ref{eq:elliptic_eq}) has a unique solution
  $\psi$ in $E$. Then, all solutions to problem (\ref{eq:elliptic_eq}) are of the form
  $\psi + K$, with an arbitrary $K \in {\mathbb R}$.
\end{lemma}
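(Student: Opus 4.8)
The plan is to treat (\ref{eq:elliptic_eq}) as a degenerate elliptic problem by exploiting the fact that $\mathcal{L}$ is the generator of a Langevin-type dynamics on the cylinder $\mathbb{S}^1_\theta \times \mathbb{R}_\kappa$, for which $\mu$ is the Gibbs equilibrium measure. First I would record the splitting $\mathcal{L} = T + L_0$ into the transport part $T\psi = \kappa\partial_\theta\psi - \lambda\sin\theta\,\partial_\kappa\psi = -\{H,\psi\}$ and the Ornstein--Uhlenbeck part $L_0\psi = \alpha^2\partial_\kappa^2\psi - \lambda\kappa\partial_\kappa\psi = \alpha^2\mathcal{N}^{-1}\partial_\kappa(\mathcal{N}\partial_\kappa\psi)$. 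Since the Hamiltonian flow of $H$ preserves both $d\theta\,d\kappa$ and $\mu = C\expo^{-\lambda H/\alpha^2}$, the operator $T$ is skew-adjoint in $L^2_\mu$; and the reversibility of the $\kappa$-diffusion with respect to $\mathcal{N}$ makes $L_0$ self-adjoint and non-positive, with Dirichlet form $\langle -L_0\psi,\psi\rangle_\mu = \alpha^2\langle|\partial_\kappa\psi|^2\rangle_\mu$. In particular the formal adjoint is $\mathcal{L}^* = -T + L_0$, and both operators annihilate the constants.

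Part (i) is then immediate: pairing $\mathcal{L}\psi = \chi$ with the constant $1$ in $L^2_\mu$ and using $\mathcal{L}^*1 = 0$ (equivalently, that $T\psi$ is a Hamiltonian divergence and $L_0\psi$ a $\partial_\kappa$-derivative, both integrating to zero against $\mu$) gives $\langle\chi\rangle_\mu = \langle\mathcal{L}\psi,1\rangle_\mu = 0$, i.e. $\chi\in E$.

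For part (ii) I would first settle uniqueness, which is the soft half. If $\psi\in E\cap D(\mathcal{L})$ solves $\mathcal{L}\psi = 0$, testing against $\psi$ kills the skew part and leaves $0 = \langle\mathcal{L}\psi,\psi\rangle_\mu = -\alpha^2\langle|\partial_\kappa\psi|^2\rangle_\mu$, so $\psi = \psi(\theta)$ is independent of $\kappa$; feeding this back into the equation leaves $\kappa\,\psi'(\theta) = 0$ for all $\kappa$, whence $\psi$ is constant and, being in $E$, vanishes. The same computation shows $\ker\mathcal{L}\cap E = \ker\mathcal{L}^*\cap E = \{0\}$ and identifies $\ker\mathcal{L}$ in all of $L^2_\mu$ with the constants; this is exactly what yields the description of the general solution as $\psi + K$, $K\in\mathbb{R}$, once existence in $E$ is known.

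The crux, and the main obstacle, is existence: the Dirichlet form controls only $\partial_\kappa\psi$, so $\mathcal{L}$ is not coercive on $E$ and a bare Lax--Milgram argument fails, the diffusion being degenerate and acting in $\kappa$ only. The plan is to recover the missing $\theta$-direction through the H\"ormander bracket $[\partial_\kappa,\,\kappa\partial_\theta] = \partial_\theta$, which is precisely the hypoellipticity/hypocoercivity mechanism. Concretely I would establish an a priori estimate $\|\psi\|_\mu \le C\,\|\mathcal{L}\psi\|_\mu$ for $\psi\in E\cap D(\mathcal{L})$, obtained from a modified functional of the form $\langle|\partial_\kappa\psi|^2\rangle_\mu + \delta\,\langle\partial_\kappa\psi,\partial_\theta\psi\rangle_\mu + \ldots$ in which the cross term, after commuting $\partial_\theta$ through $\mathcal{L}$, produces a coercive $\langle|\partial_\theta\psi|^2\rangle_\mu$ contribution for $\delta$ small; combined with a Poincar\'e inequality for $\mu$ on $E$ this controls $\|\psi\|_\mu$. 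Such an estimate gives both injectivity and closed range of $\mathcal{L}\colon E\to E$; applying the identical estimate to $\mathcal{L}^* = -T + L_0$ (whose kernel in $E$ is likewise trivial) shows the cokernel is trivial, so $\mathcal{L}$ is onto $E$ with unique solution in $E$. An equivalent fallback route is to show that the confinement in $\kappa$ together with hypoelliptic regularity makes the resolvent of $\mathcal{L}$ compact on $E$, and then conclude by the Fredholm alternative using the already established triviality of $\ker\mathcal{L}\cap E$. Either way, producing the hypocoercive coercivity estimate that defeats the degeneracy in $\theta$ is where essentially all the work lies.
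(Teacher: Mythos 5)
Your part (i) and your treatment of uniqueness and of $\ker\mathcal{L}$ coincide with the paper's: integrate against the invariant measure for (i), and test $\mathcal{L}\psi=0$ against $\psi$ to kill the Poisson-bracket part and conclude $\partial_\kappa\psi=0$, hence $\psi$ constant. Where you genuinely diverge is the existence part, which the paper relegates to two appendices: Appendix~A1 regularizes to $-\varepsilon\psi+\mathcal{L}\psi=\chi$ (solvable by coercivity and Lions' theorem), then proves the family $\{\psi_\varepsilon\}$ is bounded by a contradiction/weak-compactness argument, splitting the normalized sequence into a $\theta$-only part and its orthogonal complement and controlling them with the Gaussian Poincar\'e inequality in $\kappa$ and, after testing the equation against $\kappa$, an ordinary Poincar\'e inequality in $\theta$; Appendix~A2 instead uses the Lyapunov function $V=1+\kappa^2$ and Meyn--Tweedie/Bakry--Cattiaux--Guillin to get exponential ergodicity of the semigroup and sets $\psi=-\int_0^\infty P_t\chi\,dt$. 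Your route --- a direct hypocoercive a priori estimate $\|\psi\|_\mu\le C\|\mathcal{L}\psi\|_\mu$ on $E$ built from the cross term $\langle\partial_\kappa\psi,\partial_\theta\psi\rangle_\mu$ and the bracket $[\partial_\kappa,\kappa\partial_\theta]=\partial_\theta$, followed by a closed-range/Fredholm argument --- is a legitimate and well-trodden alternative for exactly this class of kinetic Fokker--Planck operators (the confinement in $\theta$ is automatic since $\mathbb{S}^1$ is compact and $\mathcal{M}$ is bounded above and below, so the needed Poincar\'e inequality for $\mu$ tensorizes). In spirit your cross term plays the same role as the paper's test against $\kappa$ in A1: both use the commutator structure to transfer the $\kappa$-dissipation to the $\theta$-direction. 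What your approach buys is a quantitative resolvent bound and the avoidance of the regularization-plus-contradiction scaffolding; what the paper's A1 buys is elementarity (two Poincar\'e inequalities and weak compactness, no modified functional), and A2 additionally yields an explicit rate. The one caveat: you correctly identify the hypocoercive estimate as the crux but do not carry it out, so as written this is a sound proof plan whose hardest step is still owed, whereas the paper's appendices complete the argument; if you execute the H\'erau--Nier/Villani-type computation for this specific operator (or fall back on the compact-resolvent/Fredholm route you mention), the proof closes.
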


\medskip
\noindent
In Appendices A1 and A2, we give two different proofs of the fact that
(\ref{eq:elliptic_eq}) is uniquely solvable in $E$. The proof in appendix A1 uses tools
from functional analysis (see also \cite{degond_large_2008}). The proof in appendix A2
uses probabilistic tools to analyze the stochastic equation associated to
(\ref{eq:elliptic_eq}) (see also \cite{cattiaux_asymptotic_2010}). Here we only prove (i)
and the last statement of (ii).

\medskip
\begin{proof}
  (i) The formal adjoint ${\mathcal L}^*$ of ${\mathcal L}$ is given by the expression
  (\ref{eq:Q}) of $Q$ in which $\overline{\theta} = 0$. Therefore, from section
  \ref{subsubsec:equilibria}, we have that ${\mathcal L}^* (\mu) = 0$. Integrating
  (\ref{eq:elliptic_eq}) against $\mu$ and using Green's formula leads to the necessary
  condition $\int_{\theta,\kappa} \chi \,\mu \,d\theta d\kappa = 0$, i.e. to the fact that
  $\chi$ must belong to $E$.

  \medskip
  \noindent
  The second part of (ii) amounts to showing that the null space of $\mathcal{L}$ reduces
  to the constant functions. Indeed, it is straightforward to see that $\mathcal{L} (1) =
  0$. To prove that the constant functions are the only elements of the null space of
  $\mathcal{L}$, we suppose that $\psi \in D( \mathcal{L} )$ such that
  $\mathcal{L}\psi=0$. Using that $<\mathcal{L}\psi,\,\psi>_\mu=0$, we find, using Green's
  formula:
  \begin{displaymath}
    \int_{\theta,\kappa} |\partial_\kappa \psi|^2 \,\mu\,d\theta d\kappa =0.
  \end{displaymath}
  Therefore, $\psi$ is independent of $\kappa$. So we can write: $\psi(\theta,\kappa)=
  \Phi(\theta)$. Using again that $\mathcal{L}\Phi=0$, we find that $\Phi$ is a constant.

  \medskip
  \noindent
  We refer to appendices A1 or A2 for the existence part of point (ii).
\end{proof}

\medskip
\noindent
The following proposition completely determines the set of GCI's associated to a vector $\Omega$.

\begin{proposition}
  For a given $\Omega \in \mathbb{S}^1$, the set $C_\Omega$ of the GCI's associated to
  $\Omega$ is a two dimensional vector space $C_\Omega = \mbox{Span} \{1, \psi_\Omega\}$
  where $\psi_\Omega$ is given by:
  \begin{equation}
    \label{eq:gci_psi}
    \psi_\Omega(\theta,\kappa) = \psi(\theta-\overline{\theta},\kappa),
  \end{equation}
  with $\overline{\theta}$ such that $\vec{\tau}(\theta)=\Omega$ and $\psi$ is the unique solution of:
  \begin{equation}
    \label{eq:psi_0_bis}
    \mathcal{L}\psi = -\sin \theta,
  \end{equation}
  belonging to the hyperplane $E$. Moreover, the function $\psi$ satisfies the property:
  \begin{equation}
    \label{eq:sym_psi_0}
    \psi(-\theta,-\kappa)=-\psi(\theta,\kappa).
  \end{equation}
\end{proposition}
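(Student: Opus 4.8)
The plan is to combine the characterization (\ref{eq:Q_adjoint_sin}) of the GCI's with the solvability statement of Lemma~\ref{lem:sol_eq_elliptic}, after reducing the general direction $\Omega = \vec{\tau}(\overline{\theta})$ to the reference direction $\vec{\tau}(0)$ by a translation in $\theta$. First I would observe that the operator $\mathcal{L}$ of (\ref{eq:L}) is precisely $\mathcal{Q}_\Omega^*$ in the case $\overline{\theta}=0$, for which $\sin(\overline{\theta}-\theta)=-\sin\theta$. More generally, using the translation convention (\ref{eq:convention}), a direct chain-rule computation shows that if $\psi_\Omega(\theta,\kappa)=\psi(\theta-\overline{\theta},\kappa)$ then $\mathcal{Q}_\Omega^*(\psi_\Omega)(\theta,\kappa)=(\mathcal{L}\psi)(\theta-\overline{\theta},\kappa)$; the only point to check carefully is that the shift $\theta\mapsto\theta-\overline{\theta}$ turns $\sin(\overline{\theta}-\theta)$ into $-\sin(\theta-\overline{\theta})$, matching the $-\lambda\sin\theta$ term of $\mathcal{L}$. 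Simultaneously, the two-dimensional cross product gives $\vec{\tau}(\theta)\times\Omega=\vec{\tau}(\theta)\times\vec{\tau}(\overline{\theta})=\sin(\overline{\theta}-\theta)=-\sin(\theta-\overline{\theta})$. Substituting both into (\ref{eq:Q_adjoint_sin}) and renaming $\theta-\overline{\theta}$ as $\theta$, the GCI equation becomes $\mathcal{L}\psi=-\beta\sin\theta$ for some $\beta\in\mathbb{R}$.

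Then I would invoke Lemma~\ref{lem:sol_eq_elliptic}. The right-hand side $-\sin\theta$ lies in the hyperplane $E$, because $\mu=\mathcal{M}\mathcal{N}$ (see (\ref{eq:mu})) is even in $\theta$ (as $\mathcal{M}$ in (\ref{eq:Von}) depends on $\theta$ only through $\cos\theta$) while $\sin\theta$ is odd, so $\langle-\sin\theta\rangle_\mu=0$. Hence part (ii) of the lemma yields a unique solution $\psi\in E$ of $\mathcal{L}\psi=-\sin\theta$, with general solution $\psi+K$, $K\in\mathbb{R}$. For $\beta=0$ the equation is homogeneous and, by the last statement of Lemma~\ref{lem:sol_eq_elliptic}(ii), its only solutions are the constants; for general $\beta$, linearity gives $\beta\psi+\mathrm{const}$. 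Translating back by $\overline{\theta}$, I conclude that every GCI associated to $\Omega$ is of the form $\beta\psi_\Omega+\mathrm{const}$ with $\psi_\Omega$ as in (\ref{eq:gci_psi})--(\ref{eq:psi_0_bis}), so $C_\Omega=\mathrm{Span}\{1,\psi_\Omega\}$; in particular it is two-dimensional.

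Finally, for the symmetry (\ref{eq:sym_psi_0}) I would use uniqueness together with the reflection $R:(\theta,\kappa)\mapsto(-\theta,-\kappa)$. A short computation, tracking the parities in (\ref{eq:L}) term by term, shows that $\mathcal{L}$ commutes with $R$, i.e. $\mathcal{L}(R\psi)=R(\mathcal{L}\psi)$. Setting $\tilde{\psi}(\theta,\kappa)=-\psi(-\theta,-\kappa)=-R\psi$, one then gets $\mathcal{L}\tilde{\psi}=-R(\mathcal{L}\psi)=-R(-\sin\theta)=-\sin\theta$, so $\tilde{\psi}$ solves the same equation (\ref{eq:psi_0_bis}); moreover $\tilde{\psi}\in E$, since $\mu$ is invariant under $R$ (it is even in both $\theta$ and $\kappa$) and the change of variables flips the sign of $\langle\psi\rangle_\mu=0$. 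By the uniqueness of the solution in $E$, $\tilde{\psi}=\psi$, which is exactly (\ref{eq:sym_psi_0}).

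I expect the main difficulty to be bookkeeping rather than conceptual: getting every sign right in the translation intertwining between $\mathcal{Q}_\Omega^*$, $\mathcal{L}$ and the cross product $\vec{\tau}(\theta)\times\Omega$, since a sign slip there would misidentify $\psi_\Omega$ or the coefficient $\beta$. The existence and uniqueness underlying the whole argument are not established here but imported from Lemma~\ref{lem:sol_eq_elliptic} (proved in Appendices A1--A2); the dimension count and the symmetry both follow cleanly once that input and the reflection invariance of $\mathcal{L}$, $E$ and $\mu$ are in hand.
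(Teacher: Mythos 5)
Your proposal is correct and follows essentially the same route as the paper's proof: reduce (\ref{eq:Q_adjoint_sin}) to $\mathcal{L}\psi=-\sin\theta$ by the translation $\theta\mapsto\theta-\overline{\theta}$, verify the compatibility condition by oddness, invoke Lemma \ref{lem:sol_eq_elliptic} for existence/uniqueness in $E$ and the description of all solutions as $\beta\psi+K$, and deduce the symmetry (\ref{eq:sym_psi_0}) from uniqueness together with the invariance of $\mathcal{L}$, $\mu$ and $E$ under $(\theta,\kappa)\mapsto(-\theta,-\kappa)$. You merely make explicit the sign bookkeeping and the reflection argument that the paper leaves as "simple calculations''.
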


\noindent
\begin{proof}
  We first note that (\ref{eq:Q_adjoint_sin}) is a linear problem and that it is enough to
  solve it for $\beta = 1$. Simple calculations show that $\psi_\Omega$ is a solution to
  (\ref{eq:Q_adjoint_sin}) if and only if there exists a function $\psi$ such that
  $\psi_\Omega (\theta) = \psi (\theta - \overline{\theta})$ with $\psi$ a solution of
  (\ref{eq:psi_0_bis}). This shows (\ref{eq:gci_psi}).

  To show the existence and uniqueness of a solution $\psi$ to (\ref{eq:psi_0_bis}) in
  $E$, it is enough to check that the right-hand side of (\ref{eq:psi_0_bis}) belongs to
  $E$ i.e.  satisfies the compatibility condition $\int_{\theta,\kappa} \chi \,\mu
  \,d\theta d\kappa = 0$. But this follows readily by oddness considerations.  Moreover,
  noting that the operator ${\mathcal L}$ is invariant under the transformation $(\theta,
  \kappa) \to (-\theta, -\kappa)$, (\ref{eq:sym_psi_0}) follows from the uniqueness of the
  solution.
  
  Again, by the uniqueness in $E$ and by the second part of Lemma
  \ref{lem:sol_eq_elliptic} (ii), all solutions to (\ref{eq:psi_0_bis}) consist of linear
  combinations of $\psi$ and of a constant function. It follows that the set of GCI's
  associated to $\Omega$ is the two-dimensional vector space spanned $C_\Omega = Span\{1,
  \psi_\Omega\}$. This ends the proof.
\end{proof}

\subsection{Limit $\varepsilon\rightarrow0$}
\label{subseq:limit}

Since we know the equilibria and GCI's of the operator $Q$, we can give a formal proof of
theorem \ref{thm:formal_cvgnce}.

\begin{proof2} {\bf of Theorem \ref{thm:formal_cvgnce}.}
  If we suppose that $f^\varepsilon$ converges (weakly) to $f^0$ as $\varepsilon\rightarrow0$
  we first have:
  \begin{displaymath}
    Q(f^0) = 0,
  \end{displaymath}
  which means that $f^0$ is an equilibrium. Thanks to section \ref{subsubsec:equilibria},
  $f^0$ can be written as:
  \begin{displaymath}
    f^0 = \rho^0 \mathcal{M}_{\Omega^0}(\theta) \mathcal{N}(\kappa),
  \end{displaymath}
  with $\mathcal{M}$ and $\mathcal{N}$ defined in (\ref{eq:Von})~(\ref{eq:Gauss}). The
  mass $\rho^0(t,{\bf x})$ and the direction of the flux $\Omega^0(t,{\bf x})$ are the two
  remaining unknowns.

  In order to find the system of equations which determines the evolution of $\rho^0$ and
  $\Omega^0$, we first integrate (\ref{eq:f_ep2}) with respect to $(\theta,\kappa)$. We
  find the mass conservation equation:
  \begin{displaymath}
    \partial_t\rho^\varepsilon + \nabla_{{\bf x}}\cdot{\bf j}^\varepsilon = 0,
  \end{displaymath}  
  with
  \begin{displaymath}
    {\bf j}^\varepsilon = \int_{\theta,\kappa}\vec{\tau}(\theta) f^\varepsilon\,d\theta d\kappa.
  \end{displaymath}
  In the limit $\varepsilon\rightarrow0$, this gives:
  \begin{displaymath}
    {\bf j}^\varepsilon \stackrel{\varepsilon \rightarrow 0}{\longrightarrow} {\bf
      j}^0=c_1 \rho^0 \Omega^0, 
  \end{displaymath}
  with the constant $c_1$ given by:
  \begin{equation}
    \label{eq:c_1}
    c_1 = \int_{\theta} \cos \theta\,\mathcal{M}(\theta)\,d\theta =
    \frac{I_1(\frac{\lambda^2}{\alpha^2})}{I_0(\frac{\lambda^2}{\alpha^2})} .
  \end{equation}
  Therefore we deduce that $\rho^0$ and $\Omega^0$ obey the following mass conservation
  equation:
  \begin{displaymath}
    \partial_t \rho^0 + c_1 \nabla_{{\bf x}}\cdot (\rho^0\Omega^0) = 0.
  \end{displaymath}
  
  \medskip In order to fully determine the evolution of $\rho^0$ and $\Omega^0$, we need
  to find a second equation. For this purpose, we integrate (\ref{eq:f_ep2}) against the
  generalized collisional invariant $\psi_{\Omega^\varepsilon}$ (\ref{eq:gci_psi}), with
  $\Omega^\varepsilon = \Omega_{f^\varepsilon}$. This leads to:
  \begin{displaymath}
    \int_{\theta,\kappa} (\partial_t f^\varepsilon + \vec{\tau}(\theta)\cdot\nabla_x
    f^\varepsilon)\psi_{\Omega^\varepsilon}\,d\theta d\kappa = 0.
  \end{displaymath}
  In the limit $\varepsilon\rightarrow0$, we find :
  \begin{equation}
    \label{eq:cinetique_inv2}
    \int_{\theta,\kappa} \partial_t (\rho^0 \mathcal{M}_{\Omega^0} \mathcal{N}) \,
    \psi_{\Omega^0}\,d\theta d\kappa + \int_{\theta,\kappa} \vec{\tau}(\theta) \cdot
    \nabla_{{\bf x}}(\rho^0\mathcal{M}_{\Omega^0} \mathcal{N}) \, \psi_{\Omega^0}\,d\theta
    d\kappa =0.
  \end{equation}
  For clarity, we drop the exponent '0' and write $(\rho,\Omega)$ for $(\rho^0,\Omega^0)$
  in the discussion below. Using polar coordinates for
  $\Omega=\vec{\tau}(\overline{\theta})=(\cos \theta, \sin \theta)$, elementary
  computations show that:
  \begin{eqnarray*}
    \partial_t (\rho \mathcal{M}_{\overline{\theta}}) +
    \vec{\tau}(\theta)\cdot\nabla_{{\bf x}}(\rho\mathcal{M}_{\overline{\theta}})
    &=& \partial_t \rho\,\mathcal{M}_{\overline{\theta}}
    + \rho \mathcal{M}_{\overline{\theta}}\, \frac{\lambda^2}{\alpha^2}
    \sin(\theta-\overline{\theta}) \partial_t \overline{\theta} \\
    && \hspace{-1.5cm} +\, \vec{\tau}(\theta)\cdot\big(\nabla_{\!{\bf x}}\rho
    \mathcal{M}_{\overline{\theta}} + \rho \mathcal{M}_{\overline{\theta}}
    \,\frac{\lambda^2}{\alpha^2} \sin(\theta-\overline{\theta}) \nabla_{{\bf x}}
    \overline{\theta}\big).
  \end{eqnarray*}
  Therefore, equation (\ref{eq:cinetique_inv2}) leads to:
  \begin{eqnarray*}
    && \int_{\theta,\kappa} \partial_t \rho\,\mathcal{M}_{\overline{\theta}} \mathcal{N}\,
    \psi_{\overline{\theta}}\,d\theta d\kappa \\
    &+& \frac{\lambda^2}{\alpha^2} \int_{\theta,\kappa} \rho\,
    \mathcal{M}_{\overline{\theta}} \mathcal{N}\,
    \sin(\theta-\overline{\theta})\, \partial_t\overline{\theta}\,
    \psi_{\overline{\theta}}\,d\theta d\kappa \\
    &+& \int_{\theta,\kappa} \vec{\tau}(\theta)\cdot\big(\nabla_{\!{\bf x}}\rho\,
    \mathcal{M}_{\overline{\theta}} \mathcal{N}\, \psi_{\overline{\theta}}\big) \,d\theta
    d\kappa \\
    &+& \frac{\lambda^2}{\alpha^2} \int_{\theta,\kappa}
    \vec{\tau}(\theta)\cdot\big(\rho\,\mathcal{M}_{\overline{\theta}} \mathcal{N}\,
    \sin(\theta-\overline{\theta})\, \nabla_{{\bf x}} \overline{\theta}\,
    \psi_{\overline{\theta}}\big)\,d\theta d\kappa = 0.
  \end{eqnarray*}
  This equation can be simplified using the symmetry satisfied by $\psi$
  (\ref{eq:sym_psi_0}). We treat each term separately. First, we have:
  \begin{eqnarray}
    \nonumber 
    X_1 &=& \int_{\theta,\kappa} \partial_t \rho\,\mathcal{M}_{\overline{\theta}}
    \mathcal{N}\, \psi_{\overline{\theta}}\,d\theta d\kappa \\
    &=& \partial_t \rho \int_{\theta,\kappa}\,\mathcal{M}(\theta-\overline{\theta})
    \mathcal{N}(\kappa)\, \psi(\theta-\overline{\theta},\kappa)\,d\theta d\kappa =
    0, \label{eq:X_1}
  \end{eqnarray}
  because $\mathcal{M}(\theta) \mathcal{N}(\kappa)$ is an even function of the pair
  $(\theta, \kappa)$ and $\psi(\theta,\kappa)$ is odd. For the second term, we use the
  change of unknowns $\theta'=\theta-\overline{\theta}$ and get:
  \begin{eqnarray}
    \nonumber
    X_2 &=& \frac{\lambda^2}{\alpha^2}\, \rho\, \partial_t\overline{\theta}\,
    \int_{\theta',\kappa} \mathcal{M}(\theta') \mathcal{N}(\kappa) \sin \theta'\,
    \psi(\theta',\kappa)\,d\theta' d\kappa \\
    &=& \frac{\lambda^2}{\alpha^2}\, \rho\, \partial_t\overline{\theta}\,
    \gamma_1, \label{eq:X_2}
  \end{eqnarray}
  with
  \begin{equation}
    \label{eq:gamma1}
    \gamma_1 = < \sin \theta\, \psi >_\mu
  \end{equation}
  using the notation (\ref{eq:sca_mu}). For the third term, we find:
  \begin{eqnarray*}
    X_3 &=&  \nabla_{\!{\bf x}}\rho \cdot \int_{\theta,\kappa}
    \vec{\tau}(\theta)\,\mathcal{M}_{\overline{\theta}} \mathcal{N}\,
    \psi_{\overline{\theta}} \,d\theta d\kappa \\
    &=& \nabla_{\!{\bf x}}\rho \cdot \int_{\theta,\kappa}
    \vec{\tau}(\theta+\overline{\theta})\,\mathcal{M}(\theta)\mathcal{N}(\kappa)\,\psi(\theta,\kappa)\,d\theta
    d\kappa \\
    &=& \nabla_{\!{\bf x}}\rho \cdot \int_{\theta,\kappa} \left(
      \begin{array}{c}
        \cos \theta \cos \overline{\theta} - \sin \theta \sin \overline{\theta} \\
        \sin \theta \cos \overline{\theta} + \cos \theta \sin \overline{\theta}
      \end{array}
    \right) \mathcal{M}(\theta)\mathcal{N}(\kappa)\,\psi(\theta,\kappa)\,d\theta d\kappa.
  \end{eqnarray*}
  Once again, using the symmetry satisfied  by $\psi$, we find:
  \begin{displaymath}
    X_3 = \gamma_1 \nabla_{\!{\bf x}}\rho \cdot \left(
      \begin{array}{c}
        - \sin \overline{\theta} \\
        \cos \overline{\theta}
      \end{array}
    \right),
  \end{displaymath}
  with $\gamma_1$ defined in (\ref{eq:gamma1}). If we denote by
  $\vec{\tau}(\overline{\theta})^\perp=\Omega^\perp$ the orthogonal vector to
  $\vec{\tau}(\overline{\theta})$:
  \begin{displaymath}
    \vec{\tau}(\overline{\theta})^\perp = \Omega^\perp = \left(
      \begin{array}{c}
        - \sin \overline{\theta} \\
        \cos \overline{\theta}
      \end{array}
    \right),
  \end{displaymath}
  we finally get:
  \begin{equation}
    \label{eq:X_3}
    X_3 = \gamma_1 \nabla_{\!{\bf x}}\rho \cdot \vec{\tau}(\overline{\theta})^\perp.
  \end{equation}
  For the last term, we have:
  \begin{eqnarray}
    \nonumber
    X_4 &=& \frac{\lambda^2}{\alpha^2}\,\rho\,\nabla_{{\bf x}}\overline{\theta} \cdot \int_{\theta,\kappa}
    \vec{\tau}(\theta) \mathcal{M}_{\overline{\theta}}(\theta) \mathcal{N}(\kappa)
    \sin(\theta-\overline{\theta}) \psi_{\overline{\theta}}(\theta,\kappa)\,d\theta
    d\kappa \\
    \nonumber
    &=& \frac{\lambda^2}{\alpha^2}\,\rho\,\nabla_{{\bf x}}\overline{\theta} \cdot
    \int_{\theta,\kappa} \vec{\tau}(\theta+\overline{\theta}) \mathcal{M}_{0}(\theta)
    \mathcal{N}(\kappa) \sin \theta\, \psi(\theta,\kappa)\,d\theta d\kappa \\
    &=& \frac{\lambda^2}{\alpha^2}\,\gamma_2\,\rho\,\nabla_{{\bf x}}\overline{\theta}
    \cdot \vec{\tau}(\theta), \label{eq:X_4}
  \end{eqnarray}
  with
  \begin{displaymath}
    \gamma_2 = < \cos \theta\, \sin \theta\, \psi>_\mu.
  \end{displaymath}
  Combining (\ref{eq:X_1}),~(\ref{eq:X_2}),~(\ref{eq:X_3}) and (\ref{eq:X_4}) yields:
  \begin{equation}
    \label{eq:theta_t}
    \gamma_1\,\frac{\lambda^2}{\alpha^2}\, \rho\, \partial_t\overline{\theta} + \gamma_1
    \nabla_{\!{\bf x}}\rho \cdot \vec{\tau}(\overline{\theta})^\perp \,+\, \,\gamma_2
    \frac{\lambda^2}{\alpha^2}\,\rho\,\nabla_{{\bf x}}\overline{\theta} \cdot
    \vec{\tau}(\theta) = 0.
  \end{equation}
  Using again the unit vector $\Omega=\vec{\tau}(\overline{\theta})$, elementary
  computations show that:
  \begin{displaymath}
    \partial_t \Omega = \partial_t \overline{\theta} \, \Omega^\perp \qquad \text{ and }
    \qquad (\Omega\cdot\nabla_{{\bf x}})\Omega = (\Omega^\perp\otimes\Omega)  \nabla_{{\bf
        x}} \overline{\theta}.
  \end{displaymath}
  Therefore, multiplying equation (\ref{eq:theta_t}) by $\Omega^\perp$ leads to:
  \begin{displaymath}
    \rho\, \partial_t \Omega \,+\, \frac{\alpha^2}{\lambda^2}\, (\nabla_{\!{\bf x}}\rho
    \cdot \Omega^\perp) \Omega^\perp \,+\, \frac{\gamma_2}{\gamma_1}\;\rho\, (\Omega \cdot
    \nabla_{{\bf x}}) \Omega = 0.
  \end{displaymath}
  This finally leads to:
  \begin{equation}
    \label{eq:dt_omega}
    \rho\, \partial_t \Omega \,+\,c_2\;\rho\,(\Omega\cdot \nabla_{{\bf x}}) \Omega \,+\,
    \frac{\alpha^2}{\lambda^2}\, (\mbox{Id} - \Omega\otimes\Omega)\nabla_{\!{\bf x}}\rho =
    0,
  \end{equation}
  with
  \begin{equation}
    \label{eq:c_2}
    c_2 = \frac{\gamma_2}{\gamma_1} = \frac{<\sin \theta \cos \theta\,\psi>_\mu}{<\sin
      \theta\,\psi>_\mu},
  \end{equation}
  which end the proof. 
\end{proof2}

\section{Properties of the macroscopic system}
\label{sec:macro_eq}
\setcounter{equation}{0}

\subsection{Hyperbolicity}
\label{subseq:hyperbolicity}

The macroscopic system (\ref{eq:final_system}) arising from the PTWA dynamics has the same
form as the system found in \cite{degond_continuum_2008} for the macroscopic limit of the
Vicsek model.  Indeed, if we define the diffusion coefficient $d$ as:
\begin{displaymath}
  d = \frac{\alpha^2}{\lambda^2},
\end{displaymath}
then the coefficient $c_1$ given by (\ref{eq:c_1}) and the coefficient
$\frac{\alpha^2}{\lambda^2}$ in front of the pressure term in (\ref{eq:dt_omega}) are
exactly the same in the two systems. Only the coefficient $c_2$ given by (\ref{eq:c_2})
differs from that of \cite{degond_continuum_2008}. Thus, the study of the hyperbolicity of
system (\ref{eq:final_system}) is completely similar to the one conducted for the Vicsek
model in \cite{degond_continuum_2008,motsch_numerical_2010}. We briefly summarize the
analysis here. Using the geometric constraint $|\Omega|=1$, we can parametrize the
direction of the flux $\Omega$ in polar coordinates: $\Omega=(\cos \theta,\,\sin \theta)$
with $\theta \in ]-\pi,\pi]$. In order to look at the wave propagating in the
$x$-direction, we suppose that $\rho$ and $\Omega$ are independent of $y$. Therefore,
under this assumption, the system (\ref{eq:final_system}) reduces to:
\begin{eqnarray*}
  && \ds \partial_t \rho + c_1 \partial_x \,(\rho \cos \theta)  \;\; = \;\; 0, \\
  && \ds \partial_t \theta + c_2 \cos \theta \partial_x \theta -
  \frac{\alpha^2}{\lambda^2}  \frac{\sin \theta}{\rho} \partial_x \rho \;\; = \;\; 0.
\end{eqnarray*}
The characteristic velocities of this system are given by:
\begin{displaymath}
  \gamma = \frac{1}{2} \left[ (c_1+c_2)\cos \theta \pm \sqrt{(c_1-c_2)^2 \cos^2 \theta
      + 4 c_1 \frac{\alpha^2}{\lambda^2} \sin^2 \theta}\right].
\end{displaymath}
The system is therefore \emph{hyperbolic} since the characteristic velocities are real.

\subsection{Numerical computations of $\psi$}
\label{subseq:numeric}


In order to compute the macroscopic coefficient $c_2$ (\ref{eq:c_2}), we first need to
calculate the generalized collisional invariant $\psi$ (\ref{eq:gci_psi}). With this aim,
we introduce a weak formulation of the equation satisfied by $\psi$. In the Hilbert space
$L_\mu^2(\mathbb{S}^1\times\mathbb{R})$, the function $\psi$ satisfies:
\begin{equation}
  \label{eq:weak_gci}
  <\mathcal{L}\psi,\,\varphi>_\mu = -<\sin \theta,\,\varphi>_\mu\;, \quad \forall\,\varphi \in L_\mu^2,
\end{equation}
where the scalar product $<.,.>_\mu$ is defined in (\ref{eq:sca_prod_L2_mu}) and the
operator $\mathcal{L}$ in (\ref{eq:L}). To approximate the solution $\psi$ numerically, we
use a Galerkin method. It consists in solving the weak formulation (\ref{eq:weak_gci}) for
all the functions $\varphi$ in a subspace $V$ of $L_\mu^2$ of finite dimension. To
construct such a subspace $V$, we use a Hilbert basis of $L_\mu^2$. For this purpose, we
consider the following functions:
\begin{displaymath}
  \varphi_m(\theta) = \frac{\expo^{im \theta}}{\sqrt{2\pi\mathcal{M}(\theta)}} \quad
  \text{ , } \quad  P_n(\kappa) =  \frac{H_n\left(\frac{\sqrt{\lambda}}{\alpha}
      \kappa\right)}{\sqrt{n!}},
\end{displaymath}
where $\mathcal{M}$ is defined in (\ref{eq:Von}) and $H_n$ is the $\text{n}^{\text{th}}$
Hermite polynomial. We can easily prove that the family $\{\varphi_m P_n\}_{m,\,n\geq0}$
is a Hilbert basis of $L_\mu^2$. Then, for any odd positive integers $m$ and any positive
integer $n$, we define the vector space $V_{m,n}$:
\begin{displaymath}
  V_{m,n} = \text{Span}\{\varphi_j P_k \; / \; |j|\,\leq m  \; ,\; 0\leq\, k\, \leq n\}.
\end{displaymath}
The Galerkin method consists in finding $\psi_{m,n} \in V_{m,n}$ such that equation
(\ref{eq:weak_gci}) is satisfied for every $\varphi \in V_{m,n}$:
\begin{equation}
  \label{eq:weak_gci_mn}
  <\mathcal{L}\psi_{m,n},\,\varphi>_\mu = -<\sin \theta,\,\varphi>_\mu \; , \quad
  \forall\,\varphi \in V_{m,n}.
\end{equation}
We can decompose $\psi$ as:
\begin{equation}
  \label{eq:dec_psi}
  \psi_{m,n}(\theta,\kappa) = \sum_{|j|<m , 0\leq k\leq n} C_j^k \varphi_m(\theta) P_n(\kappa),
\end{equation}
where $C_j^k$ are complex coefficients given by:
\begin{displaymath}
  C_j^k = <\psi_{m,n},\,\varphi_j\,P_k>_\mu.
\end{displaymath}
We store the coefficients $\{C_j^k\}_{|j|\leq m,0\leq k\leq n}$ in a matrix $X$ such that:
\begin{equation}
  \label{eq:matrix_X}
  X(j,k) = C_j^k.
\end{equation}
We call the matrix $X$ the matrix representation of $\psi_{m,n}$ in $V_{m,n}$. We want to
transform the problem satisfied by $\psi_{m,n}$ (\ref{eq:weak_gci}) into a matrix equation
for $X$. With this aim, we define several matrices.

\begin{definition}
  We define the matrices $L_{-1}$ and $L_{+1}$ by:
  \begin{equation}
    \label{eq:L_mp}
    L_{-1}  = \left[
      \begin{array}{ccccc}
        0 &  &   & \\
        1 & 0 &  & \\
        & \ddots & \ddots &  \\
        &  & 1 & 0
      \end{array}
    \right], \quad
    L_{+1}  = \left[
      \begin{array}{cccc}
        0 & 1 &  & \\
        & \ddots & \ddots &  \\
        &   & 0 & 1 \\
        &   &   & 0
      \end{array}
    \right].
  \end{equation}
  and the diagonal matrices:
  \begin{eqnarray*}
    D_1 &=& \text{diag}(-\!m,\, \dots, -\!1,\, 0,\, 1,\, \dots,\, m)\\
    D_2 &=& \text{diag}(0,\, 1,\, 2,\, \dots\, n).
  \end{eqnarray*}
\end{definition}

\noindent
Using the matrices defined above, we can convert the equation satisfied by $\psi_{m,n}$
(\ref{eq:weak_gci_mn}) into a matrix equation for $X$.

\begin{proposition}
  \label{ppo:psi_X}
  Let $\psi_{m,n}\in V_{m,n}$ the solution of (\ref{eq:weak_gci}) in $V_{m,n}$. Its matrix
  representation $X=\{C_j^k\}_{|j|\leq m,0\leq k\leq n}$ in the Hilbert basis $\{\varphi_m
  P_n\}$ satisfies:
  \begin{equation}
    \label{eq:X_equation}
    \beta_1 M_1 X N_1 \,+\, \beta_2 M_2 XN_2 \,-\, \lambda X D_2 = B
  \end{equation}
  with
  \begin{equation}
    \label{eq:MN}
    \begin{array}{rclcrcl}
      \beta_1 &=& \frac{i\alpha}{\sqrt{\lambda}}  &\quad,\quad& \beta_2 &=&
      \frac{i\lambda\sqrt{\lambda}}{4\alpha},\\
      M_1 &=& D_1  &\quad,\quad& N_1 &=& \sqrt{D_2}L_{-1} + L_{+1} \sqrt{D_2} ,\\
      M_2 &=& L_{-1}-L_{+1} &\quad,\quad& N_2 &=& \sqrt{D_2}L_{-1} - L_{+1} \sqrt{D_2}.
    \end{array}
  \end{equation}
  and $B$ the matrix representation of $-\sin \theta$ in $V_{m,n}$ given by:
  \begin{displaymath}
    B(j,k) = \left\{
      \begin{array}{lc}
        \frac{i}{2\sqrt{I_o(\frac{\lambda^2}{\alpha^2})}}\,\left(
          I_{|j-1|}\left(\frac{\lambda^2}{2\alpha^2}\right) -
          I_{|j+1|}\left(\frac{\lambda^2}{2\alpha^2}\right) \right) & \text{if } k=0, \\
        0 & \text{otherwise,}
      \end{array}
    \right.
  \end{displaymath}
  where $I_j$ is the modified Bessel function of order $j$.
\end{proposition}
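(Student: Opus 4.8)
The plan is to reduce the Galerkin identity (\ref{eq:weak_gci_mn}) to a componentwise matrix identity and then to compute the action of $\mathcal L$ on each basis function. Since $\{\varphi_j P_k\}$ is a Hilbert basis of $L^2_\mu$, it is orthonormal, so by linearity it suffices to test (\ref{eq:weak_gci_mn}) against $\varphi = \varphi_j P_k$ for each $|j|\le m$, $0\le k\le n$. Testing extracts on the left the coefficient of $\varphi_j P_k$ in the expansion of $\mathcal L\psi_{m,n}$, and on the right the number $-<\sin\theta,\varphi_j P_k>_\mu =: B(j,k)$. Thus I would first expand $\mathcal L(\varphi_{j'}P_{k'})$ back in the basis, read off how the coefficient array $X$ is transformed, and identify each resulting transformation with one of the products in (\ref{eq:X_equation}); left multiplication will act on the row (the $\theta$, i.e. $j$) index and right multiplication on the column (the $\kappa$, i.e. $k$) index.

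For the $\kappa$-dependence I would use that the $P_k$ are the orthonormal Hermite sequence for the Gaussian $\mathcal N$ (\ref{eq:Gauss}), so they obey the ladder relations $\partial_\kappa P_k = \frac{\sqrt\lambda}{\alpha}\sqrt{k}\,P_{k-1}$ and $\kappa P_k = \frac{\alpha}{\sqrt\lambda}(\sqrt{k+1}\,P_{k+1}+\sqrt{k}\,P_{k-1})$. Combining these gives the Ornstein--Uhlenbeck eigenrelation $-\lambda\kappa\,\partial_\kappa P_k + \alpha^2\partial_\kappa^2 P_k = -\lambda k\,P_k$, which is precisely the self-adjoint part of $\mathcal L$ (see (\ref{eq:L})) and which produces the diagonal term $-\lambda X D_2$, the eigenvalue $k$ being the $k$-th diagonal entry of $D_2$ acting on the column index.

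The two $\theta$-coupling terms $\kappa\partial_\theta\psi - \lambda\sin\theta\,\partial_\kappa\psi$ carry the real work. From $\mathcal M = C_0\,\expo^{(\lambda^2/\alpha^2)\cos\theta}$ (\ref{eq:Von}) I would compute $\partial_\theta\varphi_j = ij\,\varphi_j - \frac{i}{4}\frac{\lambda^2}{\alpha^2}(\varphi_{j+1}-\varphi_{j-1})$ together with the shift identity $\sin\theta\,\varphi_j = \frac{1}{2i}(\varphi_{j+1}-\varphi_{j-1})$, so that multiplication by $\expo^{\pm i\theta}$ merely shifts the index $j$. The diagonal piece $ij\,\varphi_j\cdot\kappa P_k$ of $\kappa\partial_\theta\psi$ gives $\beta_1 M_1 X N_1$, with $M_1 = D_1$ carrying the factor $j$, the symmetric tridiagonal $N_1 = \sqrt{D_2}L_{-1}+L_{+1}\sqrt{D_2}$ coming from $\kappa P_k$, and $\beta_1 = i\alpha/\sqrt\lambda$. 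The remaining, index-shifting pieces of $\kappa\partial_\theta\psi$ must then be added to all of $-\lambda\sin\theta\,\partial_\kappa\psi$. Here lies the one delicate step: the two sources of $\sqrt{k}\,P_{k-1}$ partially cancel, so the combined $\kappa$-action becomes the \emph{antisymmetric} tridiagonal $N_2 = \sqrt{D_2}L_{-1}-L_{+1}\sqrt{D_2}$ rather than $N_1$, while the $\theta$-shift $\varphi_{j+1}-\varphi_{j-1}$ gives $M_2 = L_{-1}-L_{+1}$ and the scalars collapse to $\beta_2 = i\lambda\sqrt\lambda/(4\alpha)$; this yields $\beta_2 M_2 X N_2$. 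Matching these shifts to (\ref{eq:L_mp})--(\ref{eq:MN}), and noting that the finite truncation simply discards out-of-range modes, assembles the left-hand side of (\ref{eq:X_equation}).

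It remains to compute $B(j,k) = -<\sin\theta,\varphi_j P_k>_\mu$. As $\sin\theta$ is $\kappa$-independent, the $\kappa$-integral reduces to $\int P_k\,\mathcal N\,d\kappa = \delta_{k,0}$, forcing $B(j,k)=0$ for $k\ne 0$. For $k=0$ the surviving $\theta$-integral equals $-\frac{1}{2\pi\sqrt{I_0(\lambda^2/\alpha^2)}}\int \sin\theta\,\expo^{-ij\theta}\,\expo^{(\lambda^2/2\alpha^2)\cos\theta}\,d\theta$; writing $\sin\theta$ in exponentials splits it into two copies of the integral representation $\frac{1}{2\pi}\int \expo^{-im\theta}\expo^{z\cos\theta}\,d\theta = I_{|m|}(z)$, which produces $I_{|j-1|}$ and $I_{|j+1|}$ at $z = \lambda^2/2\alpha^2$ and gives exactly the stated formula for $B(j,0)$. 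The overwhelming part of the argument is careful bookkeeping of the two shift structures; the only genuinely subtle point is the cancellation distinguishing $N_2$ from $N_1$, and the only nonelementary ingredient is the Bessel-function evaluation of $B$.
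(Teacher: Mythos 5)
Your proposal is correct and follows essentially the same route as the paper's proof in Appendix B: expand $\mathcal{L}(\varphi_j P_k)$ in the basis using the Hermite ladder relations, the Ornstein--Uhlenbeck eigenrelation for the self-adjoint part, and the formula $\partial_\theta\varphi_j = ij\,\varphi_j - \frac{i\lambda^2}{4\alpha^2}(\varphi_{j+1}-\varphi_{j-1})$, then read off the three matrix products, the key point being exactly the partial cancellation of the $\sqrt{k}\,P_{k-1}$ contributions that turns the shifted block into the antisymmetric $N_2$. You even supply the Bessel-integral evaluation of $B(j,0)$, which the paper's appendix leaves implicit.
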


\noindent
Since the demonstration of proposition \ref{ppo:psi_X} is only a matter of computations,
we postpone the proof to appendix B. To solve (\ref{eq:X_equation}), we transform the
linear equation (\ref{eq:X_equation}) into a linear system that we invert numerically.
This eventually allows us to construct $\psi_{m,n}$ using (\ref{eq:dec_psi}).

On figure \ref{fig:psi} (left), we display an example of an approximate solution
$\psi_{m,n}$ of the GCI $\psi$ for $\lambda=1$ and $\alpha=1$. We also estimate
$\mathcal{L}\psi_{m,n}$ numerically using a finite difference method (figure
\ref{fig:psi}, right). The figure clearly suggests that $\mathcal{L}\psi_{m,n}$ is close
to $-\sin \theta$, providing a qualitative check of the accuracy of the computation.  To
make this assessment more quantitative, we compute the residual $|\mathcal{L}\psi_{m,n} +
\sin \theta|_\infty$ for different values of $(\lambda,\,\alpha)$ on figure
\ref{fig:errorPsi}. As we can see, the residual gets larger when $\alpha$ increases and
gets smaller when $\lambda$ increases.

\begin{figure}[ht]
  \centering
  \includegraphics[scale=.5]{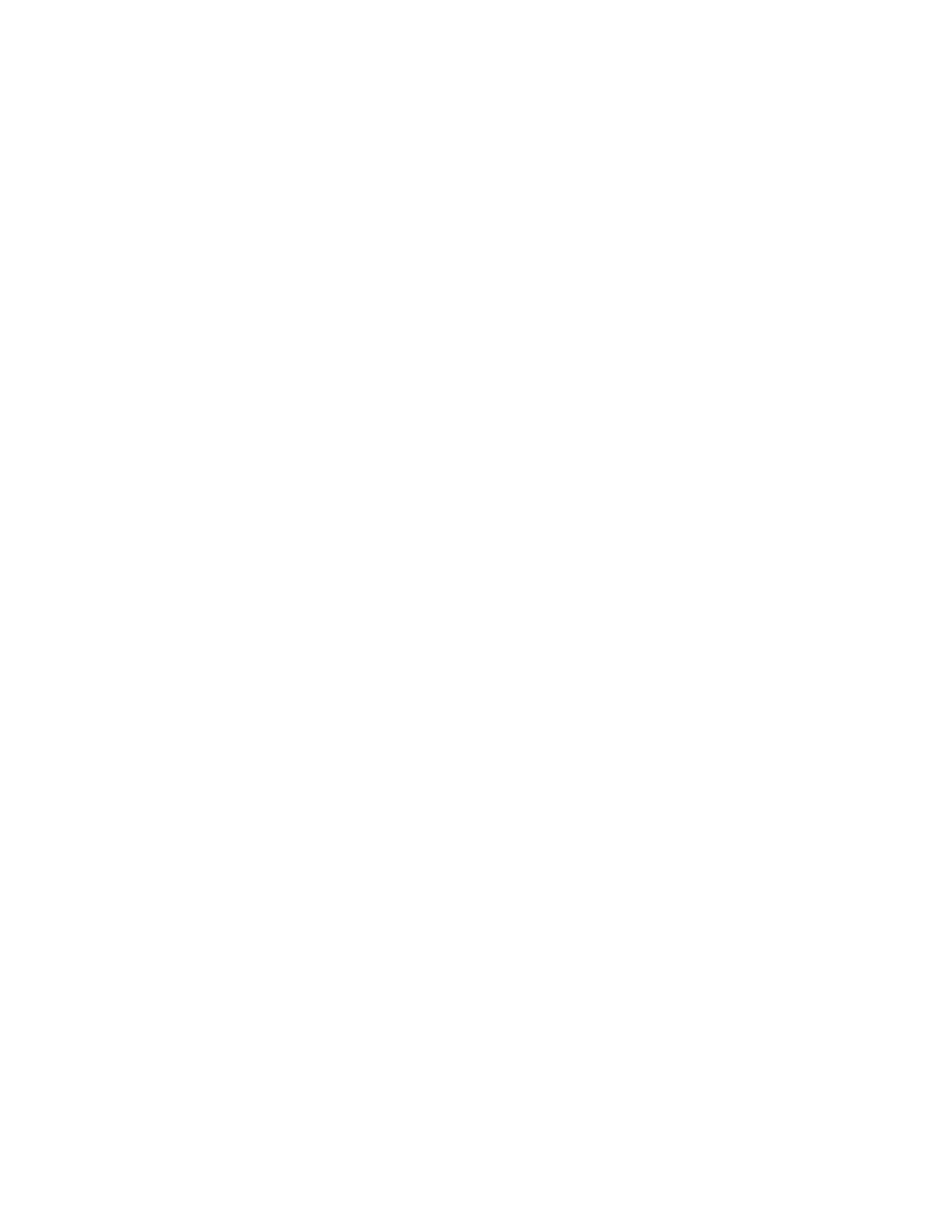}
  \includegraphics[scale=.5]{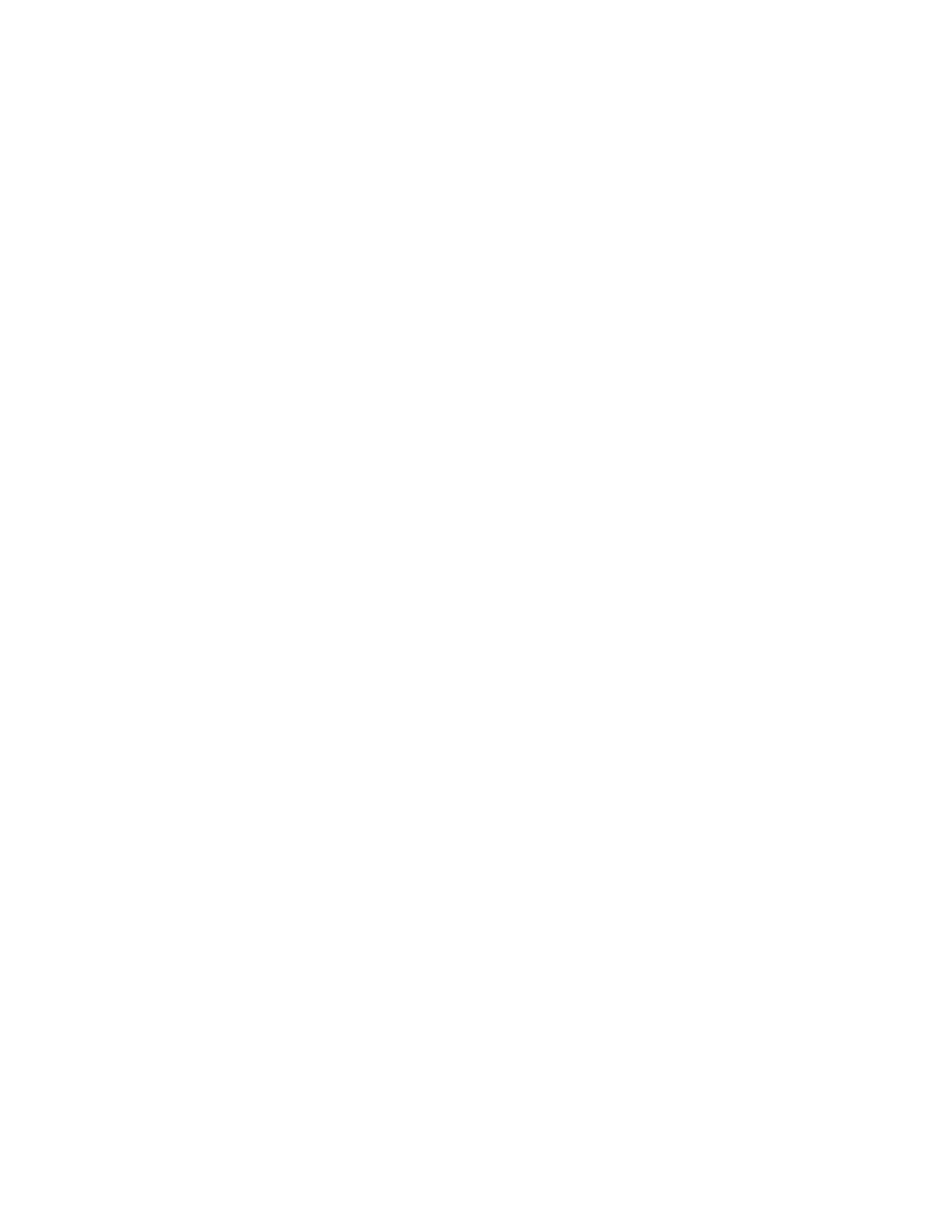}
  \caption{Left figure: the generalized collisional invariant $\psi_{m,n}$ for $\lambda=1$
    and $\alpha=1$ computed using $m=30$ and $n=61$. Right figure: we compute
    $\mathcal{L}\psi_{m,n}$ using a finite difference method with $\Delta\theta = .2$ and
    $\Delta\kappa=.2$. We clearly recover the function $-\sin \theta$ (see figure
    \ref{fig:errorPsi} for a more detailed comparison).}
  \label{fig:psi}
\end{figure}

\begin{figure}[ht]
  \centering
  \includegraphics[scale=.5]{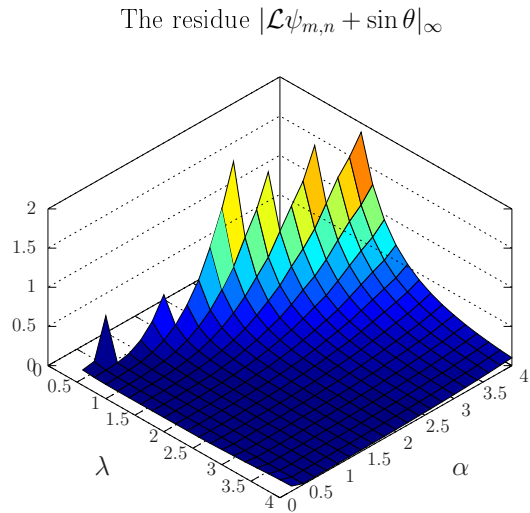}
  \caption{The residual $|\mathcal{L}\psi_{m,n}+\sin \theta|_\infty$ estimated on the
    interval $(\theta,\kappa)\in[-\pi,\pi]\times[-5,5]$ for different values of
    $(\lambda,\alpha)$. $\psi_{m,n}$ is computed as in figure \ref{fig:psi} (left) and
    $\mathcal{L}\psi_{m,n}$ is computed using a finite difference scheme (with
    $\Delta\theta=\Delta\kappa=.2$). The residual increases with $\alpha$ and decreases
    with $\lambda$.}
  \label{fig:errorPsi}
\end{figure}

\subsection{Computation of the coefficient $c_2$}
\label{subseq:coef}

Once we have computed the generalized collisional invariant $\psi$, we can calculate the
coefficient $c_2$ using (\ref{eq:c_2}). On figure \ref{fig:c2_PTWA_Vic}, we fix the the
parameter $\lambda=1$ and we compute the value of $c_2$ for different values of $\alpha$
(we still use $m=30$ and $n=61$ to get a numerical approximation of $\psi_{m,n}$). In the
same graph, we add show the coefficient $c_2$ of the Vicsek model
\cite{degond_continuum_2008,motsch_numerical_2010} for $d=\frac{\alpha^2}{\lambda^2}$. The
relative error between the two curves is very small (around $5\%$). This similarity
between the two curves shows a strong connexion between the PTWA model and the Vicsek
model. Work is in progress to study the link between the two models more deeply.

\begin{figure}[ht]
  \centering
  \includegraphics[scale=.5]{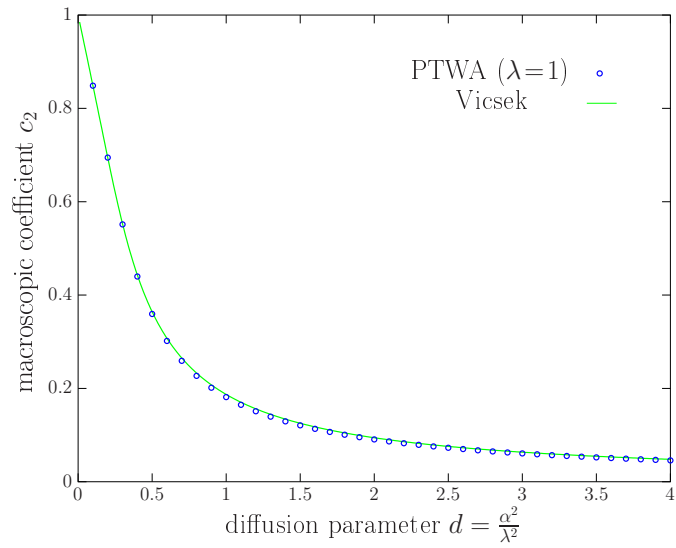}
  \caption{The coefficient $c_2$ in the PTWA model (\ref{eq:c_2}) computed for $\lambda=1$
    and different values of $\alpha$ (blue) and the coefficient $c_2$ in the Vicsek model
    (green). The relative error between the two curves is around $5\%$.}
  \label{fig:c2_PTWA_Vic}
\end{figure}

\clearpage

\section{Conclusion}
\label{sec:conclusion}
\setcounter{equation}{0}

In this work, we have introduced a new Individual-Based Model describing the displacement
of individuals which tend to align with theirs neighbors. This model, called 'Persistent
Turning Walker model with Alignment' (PTWA), is a combination of the phenomenological
Vicsek alignment model \cite{vicsek_novel_1995} with the experimentally derived PTW model
of fish displacement \cite{gautrais_analyzing_2009}. We have established the macroscopic
limit of this model within a hydrodynamic scaling where the radius of interaction of the
agents is tied to the microscopic scale. The derivation uses a new notion of 'Generalized
Collisional Invariant' developed earlier in \cite{degond_continuum_2008}. The numerical
computations of the coefficients involved in this macroscopic model have shown that there
are important similarities between the PTWA model and the Vicsek model at large scale.

The present work proves that the addition of a local alignment rule in the PTW model
changes drastically the large-scale dynamics as compared to the PTW model without
alignment interaction. Indeed, while the PTW model without alignment is diffusive at large
scales, the PTWA model becomes hyperbolic, of hydrodynamic type. As a summary, local
alignment generates macroscopic convection.

In future work, the relation between the PTWA and Vicsek dynamics will be further explored,
both at the microscopic and macroscopic levels. This ensemble of models forms a complex
hierarchy. Numerical simulations and comparisons over a wide range of parameters will be
performed to better understand the relations between these models.

Many questions concerning the derivation of macroscopic models remain open in this
context. One possible route is to explore what the macroscopic limit of the PTWA model
becomes when an attraction-repulsion rule is added. More generally, it may be possible to
classify the different types of Individual-Based Models by looking at their corresponding
macroscopic limits. Another direction is to quantify how close the macroscopic model is to
the corresponding microscopic model. In particular, the question of determining what
minimal number of individuals is required for the macroscopic description to be valid is
of crucial importance.  All these questions call for deeper numerical studies which will
permit to understand when the microscopic and macroscopic descriptions are similar and
when they are not.


\clearpage

\newpage


\section*{Appendix A1:  Proof of lemma \ref{lem:sol_eq_elliptic} (ii) (functional analytic proof)}

\noindent {\bf Proof.} First, we prove the uniqueness of the solution of
(\ref{eq:elliptic_eq}) in $E$. Indeed, we have shown in section
\ref{subsubseq:collision_invar} that the null space $\ker({\mathcal L})$ of ${\mathcal L}$
consists of the constant functions. Therefore, $\ker ({\mathcal L}) \cap E = \{ 0 \}$,
which shows the uniqueness of the solutions of (\ref{eq:elliptic_eq}) in $E$.

To prove the existence of a solution of (\ref{eq:elliptic_eq}), we first consider a
slightly modified version of equation (\ref{eq:elliptic_eq}): for a given $\varepsilon>0$, we want
to solve
\begin{equation}
  \label{eq:L_ep}
  -\varepsilon \psi + \mathcal{L}\psi \; =\; \chi.
\end{equation}
Thanks to this modification, we have the inequality:
\begin{displaymath}
  <\varepsilon \psi - \mathcal{L}\psi,\,\psi>_\mu = \varepsilon |\psi|_\mu^2 + \alpha^2
  |\partial_\kappa \psi|_\mu^2 \geq \varepsilon |\psi|_\mu^2.
\end{displaymath}
Therefore the operator $\varepsilon Id - \mathcal{L}$ is coercive, so we can apply the
theorem of J. L. Lions in \cite{lions_equations_1961} which gives a weak solution
$\psi_\varepsilon$ in $E$ of the problem (\ref{eq:L_ep}).

To find a solution of $\mathcal{L}\psi=\chi$, we need to extract a convergent subsequence
of $\{\psi_\varepsilon\}_{\varepsilon>0}$ when $\varepsilon$ goes to zero. The limit will
satisfy (\ref{eq:elliptic_eq}). Since $E$ is an Hilbert space, it remains to prove that
the family $\{\psi_\varepsilon\}_{\varepsilon>0}$ is bounded in $E$. For that, we proceed
by contradiction. If the family $\{\psi_\varepsilon\}_\varepsilon$ is not bounded in $E$
as $\varepsilon$ tends to $0$, there exists a subsequence $\varepsilon_n$ such that:
\begin{displaymath}
  |\psi_{\varepsilon_n}|_\mu \stackrel{n \rightarrow \infty}{\longrightarrow} +\infty
  \qquad,\qquad \varepsilon_n \stackrel{n \rightarrow \infty}{\longrightarrow} 0.
\end{displaymath}
To simplify the notations, we use the subscript $\varepsilon$ for $\varepsilon_n$ in the
following. Defining the functions:
\begin{equation}
  \label{eq:U_ep}
  U_\varepsilon = \frac{\psi_\varepsilon}{N_\varepsilon} 
\end{equation}
with $N_\varepsilon = |\psi_\varepsilon|_\mu$, we have that:
\begin{displaymath}
  -\varepsilon U_\varepsilon + \mathcal{L}U_\varepsilon = \frac{\chi}{N_\varepsilon}.
\end{displaymath}
Since the sequence $\{U_\varepsilon\}_\varepsilon$ is bounded ($|U_\varepsilon|_\mu=1$),
we can extract a weakly convergent subsequence (denoted by $\varepsilon$ once again) such
that:
\begin{displaymath}
  U_\varepsilon \stackrel{\varepsilon \rightarrow 0}{\rightharpoonup} U_0 \quad \text{
    weakly in } L_\mu^2.
\end{displaymath}
In particular, since $N_\varepsilon \stackrel{\varepsilon \rightarrow 0}{\longrightarrow}
+\infty$, we have that $\mathcal{L}U_0=0$ and therefore by uniqueness $U_0=0$. This means
that $U_\varepsilon$ converges weakly to zero. We will obtain a contradiction with the
fact $|U_\varepsilon|_\mu=1$ if we prove that $U_\varepsilon$ converges strongly to zero.

To prove the strong convergence of $U_\varepsilon$, we decompose the functions
$U_\varepsilon$ in two parts. For that, we introduce the vector space $L$:
\begin{displaymath}
  L = \{\Phi\in L^2(\mathbb{S}^1)\; /\; \int_{\theta} \Phi(\theta)\,\mathcal{M}(\theta)\,d\theta = 0\}.
\end{displaymath}
It is easy to see that $L \subset E$. We denote by $L^\perp$ the orthogonal space of $L$ such that:
\begin{displaymath}
  E = L \stackrel{\perp}{\oplus} L^\perp.
\end{displaymath}
We can decompose the sequence $U_\varepsilon$ as $U_\varepsilon = \Phi_\varepsilon +
v_\varepsilon$ with $\Phi_\varepsilon\in L$ and $v_\varepsilon \in L^\perp$. First, we are
going to prove that $v_\varepsilon$ converges to zero using that $\mathcal{L}$ is coercive
on $L^\perp$. Taking the scalar product of the equation (\ref{eq:U_ep}) against
$U_\varepsilon$, we find:
\begin{displaymath}
  -\varepsilon |U_\varepsilon|_\mu^2 + <\mathcal{L}U_\varepsilon,\,U_\varepsilon>_\mu =
  \frac{1}{N_\varepsilon} <\chi,\,U_\varepsilon>.
\end{displaymath}
Therefore, at the limit $\varepsilon\rightarrow0$, we have:
\begin{displaymath}
  <\mathcal{L}U_\varepsilon,\,U_\varepsilon>_\mu \stackrel{\varepsilon \rightarrow 0}{\longrightarrow} 0.
\end{displaymath}
Since we have the equality $<\mathcal{L}U_\varepsilon,\,U_\varepsilon>_\mu =
-\alpha^2|\partial_\kappa U_\varepsilon|_\mu^2$ (\ref{eq:entropy}) and $\partial_\kappa
U_\varepsilon = \partial_\kappa v_\varepsilon$, we obtain that:
\begin{equation}
  \label{eq:d_k_v_ep}
  |\partial_\kappa v_\varepsilon|_\mu^2 \stackrel{\varepsilon \rightarrow 0}{\longrightarrow} 0.
\end{equation}
Then we use the Poincar\'{e} inequality for Gaussian measures \cite{gross_logarithmic_1993}:
\begin{equation}
  \label{eq:Poincare_Gauss}
  \int_\kappa |f - \overline{f}|^2\,\mathcal{N}\,d\kappa \leq C \int_\kappa
  |\partial_\kappa f|^2\,\mathcal{N}\,d\kappa,
\end{equation}
with $C$ a positive constant and $\overline{f}$ the mean of $f$ defined as:
\begin{displaymath}
  \overline{f} = \int_\kappa f(\kappa)\,\mathcal{N}\,d\kappa.
\end{displaymath}
Applying the Poincar\'{e} inequality (\ref{eq:Poincare_Gauss}) to $v_\varepsilon$ leads
to:
\begin{eqnarray}
  \nonumber 
  |\partial_\kappa v_\varepsilon|_\mu^2 &=& \int_\theta \int_\kappa |\partial_\kappa
  v_\varepsilon|^2 \mathcal{N}\,\mathcal{M}\,d\kappa d\theta \\
  \nonumber 
  &\geq&  \int_\theta C^{-1} \int_\kappa |v_\varepsilon- \overline{v}_\varepsilon|^2
  \mathcal{N}\,d\kappa\,\mathcal{M}\,d\theta \\
  &\geq& C^{-1} |v_\varepsilon- \overline{v}_\varepsilon|_\mu^2. \label{eq:v_dk_v}
\end{eqnarray}
Since $v_\varepsilon \in L^\perp$, for all $\Phi(\theta) \in L$, we have:
\begin{displaymath}
  \int_{\theta,\kappa} v_\varepsilon(\theta,\kappa) \Phi(\theta) \mathcal{M}(\theta)
  \mathcal{N}(\kappa)\,d\theta d\kappa = \int_\theta \overline{v}_\varepsilon(\theta)
  \Phi(\theta) \mathcal{M}\,d\theta = 0.
\end{displaymath}
Therefore $\overline{v}_\varepsilon(\theta) = 0$. Combining the inequality
(\ref{eq:v_dk_v}) with (\ref{eq:d_k_v_ep}) yields:
\begin{displaymath}
  |v_\varepsilon|_\mu^2 \stackrel{\varepsilon \rightarrow 0}{\longrightarrow} 0.
\end{displaymath}
It remains to prove that $\Phi_\varepsilon$ converges to zero. With this aim, we take the
scalar product of the equation (\ref{eq:U_ep}) against the function $\kappa$. Once we take
the limit $\varepsilon\rightarrow0$, we find:
\begin{displaymath}
  <\mathcal{L}U_\varepsilon,\,\kappa>_\mu \stackrel{\varepsilon \rightarrow 0}{\longrightarrow} 0.
\end{displaymath}
Using that $|\partial_\kappa v_\varepsilon|_\mu^2$ also converges to zero, we deduce that:
\begin{equation}
  \label{eq:d_th_U}
  \int_{\theta,\kappa} \kappa^2 \partial_\theta U_\varepsilon\,\mu\,d\theta d\kappa
  \stackrel{\varepsilon \rightarrow 0}{\longrightarrow} 0.
\end{equation}
We would like to use once again a Poincar\'{e} inequality. With this aim, we define the
function $h_\varepsilon(\theta)$ as:
\begin{displaymath}
  h_\varepsilon(\theta) = \int_\kappa \kappa^2
  U_\varepsilon(\theta,\kappa)\,\mathcal{N}(\kappa)\,d\kappa
\end{displaymath}
and we use the notation:
\begin{displaymath}
  |h(\theta)|_\mathcal{M}^2 = \int_\theta |h(\theta)|^2\,\mathcal{M}\,d\theta.  
\end{displaymath}
So equation (\ref{eq:d_th_U}) can be read as $|\partial_\theta
h_\varepsilon|_\mathcal{M}^2 \stackrel{\varepsilon \rightarrow 0}{\longrightarrow} 0$.
The usual Poincar\'{e} inequality gives:
\begin{equation}
  \label{eq:Poincare2}
  |h_\varepsilon-\overline{h}_\varepsilon|_{\mathcal{M}}^2 \leq C |\partial_\theta
  h_\varepsilon|_{\mathcal{M}}^2,
\end{equation}
with $\overline{h}_\varepsilon = \int_\theta h_\varepsilon(\theta)
\mathcal{M}(\theta)\,d\theta$. But since we already know that $U_\varepsilon$
converges weakly to zero, we have:
\begin{displaymath}
  \overline{h}_\varepsilon = <U_\varepsilon,\,\kappa^2>_\mu \stackrel{\varepsilon \rightarrow 0}{\longrightarrow} 0.
\end{displaymath}
Therefore the Poincar\'{e} inequality (\ref{eq:Poincare2}) yields $h_\varepsilon
\stackrel{\varepsilon \rightarrow 0}{\longrightarrow} 0$, or in other words:
\begin{equation}
  \label{eq:cv_without_dth}
  \int_{\theta,\kappa} \kappa^2 U_\varepsilon\,\mu\,d\theta d\kappa \stackrel{\varepsilon
    \rightarrow 0}{\longrightarrow} 0.
\end{equation}
Since $v_\varepsilon$ converges to zero, equation (\ref{eq:cv_without_dth}) leads to:
\begin{displaymath}
  \int_{\theta,\kappa} \Phi_\varepsilon(\theta) \kappa^2 \mathcal{M}(\theta)
  \mathcal{N}(\kappa)\,d\theta d\kappa \stackrel{\varepsilon \rightarrow
    0}{\longrightarrow} 0,
\end{displaymath}
which finally gives that $\Phi_\varepsilon$ also converges strongly to zero in $L_\mu^2$.\\
Since both $v_\varepsilon$ and $\Phi_\varepsilon$ convergence strongly to zero,
$U_\varepsilon$ converges strongly to zero as well. This contradicts that
$|U_\varepsilon|_\mu=1$ for all $\varepsilon$. Therefore, the sequence $\psi_\varepsilon$
is bounded in $L_\mu^2$, so we can extract a subsequence which converges weakly to
$\psi_0$ in $L^2_\mu$. This function $\psi_0$ has to satisfy:
\begin{displaymath}
  \mathcal{L}\psi_0 = \chi
\end{displaymath}
which ends the proof of the lemma.

\endproof

\section*{Appendix A2:  Proof of lemma \ref{lem:sol_eq_elliptic} (ii) (probabilistic proof)}

\noindent {\bf Proof.} The operator $\mathcal{L}$ is the infinitesimal generator of the
following stochastic differential equation:
\begin{eqnarray}
  \label{eq:theta_linear}
  d\theta &=& \kappa dt , \\
  \label{eq:kappa_linear}
  d\kappa &=& -\lambda(\sin \theta + \kappa)\,dt + \sqrt{2} \alpha\, dB_t, 
\end{eqnarray}
For any function $\varphi$ regular enough, we can define the semi-group:
\begin{displaymath}
  P_t(\varphi)(\theta,\kappa) = \mathbb{E}[\varphi(X_t) | X_0=(\theta,\kappa)],
\end{displaymath}
with $X_t$ the stochastic process solution of
(\ref{eq:theta_linear})-(\ref{eq:kappa_linear}). This defines a solution of the following
equation (see \cite{oksendal_stochastic_1992}):
\begin{displaymath}
  \left\{ \begin{array}{l}
      \partial_t u = \mathcal{L}u \\
      u_{t=0} = \varphi.
    \end{array} \right.
\end{displaymath}
In particular, if we define $u(t)=P_t(\chi)$, a simple integration by part leads to:
\begin{equation}
  \label{eq:u_temp}
  u(t) - \chi = \int_0^t \mathcal{L}u(s)\,ds.
\end{equation}
Therefore, we will find a solution to (\ref{eq:elliptic_eq}) if we are able to prove that
$u(t) \stackrel{t \rightarrow \infty}{\longrightarrow} 0$. For that, we first notice that
the equilibrium measure associated with $\mathcal{L}$ is given by $\mu$ (\ref{eq:mu}) and
its adjoint operator in $L_\mu^2$ is given by:.
\begin{displaymath}
  \mathcal{L}^*\psi = -\kappa\partial_\theta \psi + \lambda\sin \theta \partial_\kappa\psi
  - \lambda\kappa\partial_\kappa\psi + \alpha^2\partial_\kappa^2\psi.
\end{displaymath}
Moreover, we can find a Lyapunov function associated with $\mathcal{L}$. The function
$V(\theta,\kappa)=1+\kappa^2$ satisfies:
\begin{eqnarray*}
  \mathcal{L}^*V &=& 2\lambda\sin \theta\, \kappa - 2\lambda\kappa^2 +2\alpha^2 \\
  &\leq& 2\lambda\kappa - \lambda(1+\kappa^2) -\lambda\kappa^2 + \lambda  + 2\alpha^2 \\
  &\leq& -\lambda V + 2(\alpha^2+\lambda)\mathds{1}_{ \{ |\kappa|\leq 2 + \sqrt{1+(2\alpha/\lambda+1)^2} \} }.
\end{eqnarray*}
Therefore $V$ is a Lyapunov function in the sense of
\cite[Def. 1.1]{bakry_rate_2007}. Since $B = \mathbb{S}^1\times\{|\kappa| \leq 2 +
\sqrt{1+(2\alpha/\lambda+1)^2}\}$ is compact, $B$ is a ``petite set'' in the terminology
\cite[Def. 1.1]{bakry_rate_2007} of Meyn \& Tweedie\cite{meyn_stability_1993}. So we can
apply \cite[Th. 2.1]{bakry_rate_2007} and conclude that there exists a constant $K_2> 0$
such that for all bounded function $\varphi$ satisfying $\int_{\theta,\kappa} \varphi\,
\mu \,d\theta d\kappa =0$, we have:
\begin{displaymath}
  |P_t(\varphi)|_\mu \leq K_2 \|\varphi\|_\infty\,\expo^{-\lambda t}.
\end{displaymath}
Therefore, we can pass to the limit $t\rightarrow\infty$ in (\ref{eq:u_temp}) to find that:
\begin{displaymath}
  -\chi = \int_0^\infty \mathcal{L}u(s)\,ds,
\end{displaymath}
Defining the function $\psi = -\int_0^\infty u(s)\,ds$, we get a solution to:
\begin{displaymath}
  \mathcal{L}\psi = \chi.
\end{displaymath}
For the uniqueness of the solution, we proceed as in appendix A1.

\endproof

\section*{Appendix B:  Proof of proposition \ref{ppo:psi_X}.}

\noindent {\bf Proof.} We first prove the following lemma.

\begin{lemma}
  \label{eq:coeff_L}
  For every integer $m$ and every positive integer $n\geq0$, we have:
  \begin{displaymath}
    \mathcal{L}(\varphi_m P_n) = \sum_{\substack{-1\leq j \leq 1\\ -1\leq k\leq1}}
    D^{m,n}(j,k) \varphi_{m+j}P_{n+k}
  \end{displaymath}
  with $D^{m,n}$ a $3\times3$ matrix given by:
  \begin{equation}
    \label{eq:D_matrix}
    D^{m,n} = \left[
      \begin{array}{ccccc}
        \ds -\beta_2\,\sqrt{n}  & & 0 & & \ds \beta_2\,\sqrt{n\!+\!1} \\
        \\
        \ds \beta_1\,m\sqrt{n} & & -\lambda n & & \ds \beta_1 \,m\sqrt{n\!+\!1} \\
        \\
        \ds \beta_2\,\sqrt{n} & & 0 & & \ds -\beta_2\,\sqrt{n\!+\!1}
      \end{array}
    \right]
  \end{equation}
  with:
  \begin{displaymath}
    \beta_1 = \frac{i\alpha}{\sqrt{\lambda}} \quad , \quad \beta_2 =
    \frac{i\lambda\sqrt{\lambda}}{4\alpha}.
  \end{displaymath}
\end{lemma}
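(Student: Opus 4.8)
The plan is to compute $\mathcal{L}(\varphi_m P_n)$ head-on, exploiting that the operator $\mathcal{L}$ in \eqref{eq:L} is a sum of four terms, each acting on a single variable at a time. Expanding the product gives
\[
  \mathcal{L}(\varphi_m P_n) = \kappa\,\varphi_m'(\theta)\,P_n(\kappa) - \lambda\sin\theta\,\varphi_m(\theta)\,P_n'(\kappa) - \lambda\kappa\,\varphi_m(\theta)\,P_n'(\kappa) + \alpha^2\,\varphi_m(\theta)\,P_n''(\kappa),
\]
so the whole computation reduces to expressing the one-variable actions $\varphi_m'$ and $\sin\theta\,\varphi_m$ in the basis $\{\varphi_{m'}\}$, and $\kappa P_n$, $P_n'$, $P_n''$ in the basis $\{P_{n'}\}$, and then multiplying the resulting expansions. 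The convenient first move is to rewrite $\varphi_m$ using the explicit Von~Mises density \eqref{eq:Von}: since $\sqrt{2\pi\mathcal{M}(\theta)} = \sqrt{2\pi C_0}\,\expo^{\frac{\lambda^2}{2\alpha^2}\cos\theta}$, one has $\varphi_m(\theta) = c\,\expo^{im\theta}\expo^{-\frac{\lambda^2}{2\alpha^2}\cos\theta}$ with $c=(2\pi C_0)^{-1/2}$, after which every $\theta$-derivative is transparent.

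For the $\theta$ variable I would first establish the two identities
\[
  \varphi_m' = im\,\varphi_m + \tfrac{\lambda^2}{2\alpha^2}\sin\theta\,\varphi_m, \qquad \sin\theta\,\varphi_m = \tfrac{1}{2i}\,(\varphi_{m+1} - \varphi_{m-1}).
\]
The first is immediate from the exponential form of $\varphi_m$; the second follows from $\sin\theta = (\expo^{i\theta}-\expo^{-i\theta})/(2i)$, because multiplying by $\expo^{\pm i\theta}$ merely shifts the index $m$ while leaving the factor $\expo^{-\frac{\lambda^2}{2\alpha^2}\cos\theta}$ unchanged. Substituting the second identity into the first shows that $\varphi_m'$ is a combination of $\varphi_{m-1},\varphi_m,\varphi_{m+1}$ only, which already explains why the index $m$ can shift by at most $\pm1$.

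For the $\kappa$ variable I would work directly with the normalized Hermite functions $P_n$. Writing $y=\frac{\sqrt\lambda}{\alpha}\kappa$ and using the standard Hermite recurrences together with the normalization $1/\sqrt{n!}$, one obtains
\[
  \kappa P_n = \tfrac{\alpha}{\sqrt\lambda}\bigl(\sqrt{n+1}\,P_{n+1} + \sqrt n\,P_{n-1}\bigr), \qquad P_n' = \tfrac{\sqrt\lambda}{\alpha}\,\sqrt n\,P_{n-1},
\]
and, most importantly, the diagonal relation $-\lambda\kappa P_n' + \alpha^2 P_n'' = -\lambda n\,P_n$, which is nothing but the Hermite differential equation in disguise and directly produces the central entry $-\lambda n$ of $D^{m,n}$. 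Here too the index $n$ shifts by at most $\pm1$.

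The final step is to multiply the $\theta$- and $\kappa$-expansions term by term and collect the coefficient of each $\varphi_{m+j}P_{n+k}$. The diagonal block $-\lambda\kappa\partial_\kappa+\alpha^2\partial_\kappa^2$ contributes only at $(j,k)=(0,0)$, giving $-\lambda n$. The term $\kappa\varphi_m' P_n$ splits, via $\varphi_m' = im\varphi_m + \frac{\lambda^2}{2\alpha^2}\sin\theta\,\varphi_m$, into a middle-row part producing $\beta_1 m\sqrt{n+1}$ and $\beta_1 m\sqrt n$ with $\beta_1 = \frac{i\alpha}{\sqrt\lambda}$, and a corner part in $\beta_2 = \frac{i\lambda\sqrt\lambda}{4\alpha}$; the term $-\lambda\sin\theta\,\varphi_m P_n'$ contributes only at the two corners $(j,k)=(\pm1,-1)$. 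The one genuine point to watch --- the main (though purely bookkeeping) obstacle --- is that the four corners $(j,k)=(\pm1,\pm1)$ each receive contributions from two distinct terms of $\mathcal{L}$, and the signs must superpose correctly: at $k=-1$ the $\kappa\partial_\theta$ contribution $\mp\beta_2\sqrt n$ and the $\sin\theta\,\partial_\kappa$ contribution $\pm2\beta_2\sqrt n$ add up to $\pm\beta_2\sqrt n$, whereas at $k=+1$ only the first term survives, giving $\mp\beta_2\sqrt{n+1}$. Checking that these signs, together with the identifications of $\beta_1$ and $\beta_2$, reproduce exactly the matrix \eqref{eq:D_matrix} completes the proof.
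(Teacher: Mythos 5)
Your proposal is correct and follows essentially the same route as the paper's Appendix~B: the identities $\varphi_m' = im\varphi_m - \frac{i\lambda^2}{4\alpha^2}(\varphi_{m+1}-\varphi_{m-1})$, the Hermite recurrences for $\kappa P_n$ and $P_n'$, the eigenvalue relation $-\lambda\kappa\partial_\kappa P_n+\alpha^2\partial_\kappa^2 P_n=-\lambda n P_n$, and the superposition of the two contributions at the $k=-1$ corners are exactly the steps the authors carry out. The only cosmetic difference is that you isolate $\sin\theta\,\varphi_m=\frac{1}{2i}(\varphi_{m+1}-\varphi_{m-1})$ as a separate identity before substituting, whereas the paper performs the substitution in one chain.
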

\begin{proof}
  First, using the properties of the Hermite polynomials\footnote{Indeed $H_n'=n H_{n-1}$
    and $x H_{n} = H_{n+1} + n H_{n-1}$}, we can find several properties of $P_n$:
  \begin{eqnarray}
    \nonumber
    P_n' &=& \frac{\sqrt{\lambda}}{\alpha} \sqrt{n}\,P_{n-1} ,\\
    \label{eq:P_kappa}
    \kappa P_n &=& \frac{\alpha}{\sqrt{\lambda}}  \left(\sqrt{n+1}
      P_{n+1} + \sqrt{n} P_{n-1} \right).
  \end{eqnarray}
  In particular, the polynomials $P_n$ are eigenfunctions of the self-adjoint part of
  $\mathcal{L}$:
  \begin{equation}
    \label{eq:P_n_eigenfunction}
    - \lambda\kappa\partial_\kappa P_n + \alpha^2 \partial_k^2 P_n = -\lambda n P_n.
  \end{equation}
  Then, we compute:
  \begin{displaymath}
    \mathcal{L}(\varphi_m P_n) = \kappa P_n \partial_\theta \varphi_m -\lambda \sin
    \theta\,\varphi_m\, \partial_\kappa P_n \,+\,\varphi_m(- \lambda\kappa\partial_\kappa
    P_n + \alpha^2 \partial_k^2 P_n).
  \end{displaymath}
  The derivative of $\varphi_m$ with respect to $\theta$ is given by:
  \begin{eqnarray*}
    \partial_\theta \varphi_m &=& \partial_\theta
    \left(\frac{\expo^{im\theta}}{\sqrt{2\pi\mathcal{M}}}\right) = im
    \left(\frac{\expo^{im\theta}}{\sqrt{2\pi\mathcal{M}}}\right) +
    \frac{\expo^{im\theta}}{\sqrt{2\pi}}\left(-\frac{1}{2} \frac{-\frac{\lambda^2}{\alpha^2} \sin \theta
        \mathcal{M}}{\mathcal{M}^{3/2}}\right)\\
    &=& im \varphi_m + \frac{\lambda^2}{2 \alpha^2}\frac{\expo^{im\theta}}{\sqrt{2\pi}}
    \left(\frac{\expo^{i\theta}-\expo^{-i\theta}}{2i}\,\frac{1}{\sqrt{\mathcal{M}}}\right) \\
    &=& im \varphi_m - \frac{i\lambda^2}{4 \alpha^2}(\varphi_{m+1}-\varphi_{m-1}).
  \end{eqnarray*}
  Using (\ref{eq:P_kappa}), we also have:
  \begin{eqnarray}
    \nonumber
    \hspace{-.2cm} \kappa P_n \partial_\theta \varphi_m &=& \frac{\alpha}{\sqrt{\lambda}}
    \left(\sqrt{n\!+\!1} P_{n+1} \!+\! \sqrt{n} P_{n-1}
    \right)\,\left(im \varphi_m - \frac{i\lambda^2}{4 \alpha^2}(\varphi_{m+1}-\varphi_{m-1})\right).\\
    &=& \frac{i\alpha}{\sqrt{\lambda}} \left(m \sqrt{n\!+\!1} P_{n+1} \varphi_m +  m \sqrt{n}
      P_{n-1}\varphi_m \right) \label{eq:part1}\\
    &&-\frac{i\lambda\sqrt{\lambda}}{4 \alpha} \left(\sqrt{n\!+\!1} P_{n+1} \varphi_{m+1} +
      \sqrt{n} P_{n-1} \varphi_{m+1}  \right) \label{eq:part2}\\
    && +\frac{i\lambda\sqrt{\lambda}}{4 \alpha} \left(\sqrt{n\!+\!1} P_{n+1} \varphi_{m-1}  + 
      \sqrt{n} P_{n-1} \varphi_{m-1} \right). \label{eq:part3}
  \end{eqnarray}
  Thus, we have:
  \begin{eqnarray}
    \nonumber 
    -\lambda \sin \theta\,\varphi_m\, \partial_\kappa P_n &=& -\lambda
    \frac{\expo^{i\theta}-\expo^{-i\theta}}{2i}\,\varphi_m\,\frac{\sqrt{\lambda}}{\alpha}
    \sqrt{n}\,P_{n-1} \\
    &=& \frac{i\lambda\sqrt{\lambda}}{2\alpha}\sqrt{n} (\varphi_{m+1}-\varphi_{m-1})P_{n-1}. \label{eq:part4}
  \end{eqnarray}
  Finally, since $P_n$ satisfies (\ref{eq:P_n_eigenfunction}), we get:
  \begin{equation}
    \varphi_m(- \lambda\kappa\partial_\kappa P_n + \alpha^2 \partial_k^2 P_n) = -\lambda n\,\varphi_m P_n. \label{eq:part5}
  \end{equation}
  Combining
  (\ref{eq:part1})~(\ref{eq:part2})~(\ref{eq:part3})~(\ref{eq:part4})~(\ref{eq:part5}), we
  find the expression (\ref{eq:D_matrix}) of $D^{m,n}$.
\end{proof}
To find the matrix representation of the operator $\mathcal{L}$ in $V_{m,n}$, we introduce
the vectors $u$ and $v$ defined by:
\begin{eqnarray*}
  u &=& (\varphi_{-m},\,\dots,\,\varphi_0,\,\dots,\,\varphi_m)^T \\
  v &=& (P_0,\,\dots,\,P_n)^T.
\end{eqnarray*}
With these notations, a function $\psi\in V_{m,n}$ with a matrix representation $X$
(\ref{eq:matrix_X}) can be written as:
\begin{displaymath}
  \psi_{m,n}  = \sum_{|j|\leq m,\,0\leq k\leq n} \!\!\!\!C_j^k \varphi_j P_k \,=\, u^T\,X\,v.
\end{displaymath}
Moreover, thanks to the matrices defined in (\ref{eq:L_mp}), we can write for example
\begin{eqnarray*}
  u^T\,X L_{-1}\, v &=& \sum_{|j|\leq m,\,1\leq k\leq n} C_j^k \varphi_j P_{k-1} \\
  (D_1 u)^T\,X\,v &=& \sum_{|j|\leq m,\,1\leq k\leq n} j C_j^k \varphi_j P_k.
\end{eqnarray*}
For a function $\psi\in L_\mu^2$, using the lemma \ref{eq:coeff_L}, we can write:
\begin{eqnarray*}
  \mathcal{L}\psi &=& \sum_{m,n} C_m^n \left(\sum_{\substack{-1\leq j \leq 1\\ -1\leq
        k\leq 1}} D^{m,n}(j,k)
    \varphi_{m+j}P_{n+k} \right) \\
  &=& \sum_{m,n} C_m^n \left(\beta_1m\varphi_{m} (\sqrt{n}P_{n-1} + \sqrt{n+1}P_{n+1}) \right.\\
  && \hspace{1cm}\ds + \beta_2 (-\varphi_{m-1}+\varphi_{m+1})\sqrt{n}P_{n-1}\\
  && \hspace{1.5cm}\ds+ \beta_2 (\varphi_{m-1}-\varphi_{m+1})\sqrt{n+1}P_{n+1} \;- \lambda
  \varphi_m\,n P_n \big).
\end{eqnarray*}
Therefore, for every $\varphi\in V_{m,n}$, we have:
\begin{eqnarray*}
  < \mathcal{L}\psi , \, \varphi>_\mu &=& <\;\beta_1 (D_1 u)^T X (\sqrt{D_2}L_{-1} v) + \beta_1 (D_1
  u)^T X (L_{+1}\sqrt{D_2} v) \\
  && - \beta_2 (L_{-1} u)^T X (\sqrt{D_2}L_{-1} v) + \beta_2 (L_{+1} u)^T X (\sqrt{D_2}L_{-1} v) \\
  && + \beta_2 (L_{-1} u)^T X (L_{+1}\sqrt{D_2}v) - \beta_2 (L_{+1} u)^T X (L_{+1}\sqrt{D_2}v) \\
  && - \lambda u^T X (D_2 v) \quad, \quad \varphi>_\mu.
\end{eqnarray*}
We can simplify this expression:
\begin{eqnarray*}
  < \mathcal{L}\psi , \, \varphi>_\mu &=& <\; u^T \,\beta_1 D_1 X (\sqrt{D_2}L_{-1}   +  L_{+1}\sqrt{D_2})\,v \\
  && + u^T\,\beta_2 (-L_{-1} + L_{+1})^T X (\sqrt{D_2}L_{-1} )\,v \\
  && + u^T\,\beta_2 (L_{-1} - L_{+1})^T X (L_{+1} \sqrt{D_2})\,v \\
  && - \lambda u^T X (D_2 v) \quad, \quad \varphi>_\mu ,
\end{eqnarray*}
which finally gives
\begin{displaymath}
  < \mathcal{L}\psi , \, \varphi>_\mu = <\;\beta_1 M_1 X N_1 \,+\, \beta_2 M_2 XN_2 \,-\,
  \lambda X D_2 \quad, \quad \varphi>_\mu ,
\end{displaymath}
with $M_1,\,M_2,\,N_1$ and $N_2$ defined in (\ref{eq:MN}). Therefore, using $\varphi=-\sin
\theta$, we find that $X$ has to satisfy equation (\ref{eq:X_equation}).

\endproof


\end{document}